\theoremstyle{definition} 
 \newtheorem{definition}{Definition}[section]
 \newtheorem{remark}[definition]{Remark}
 \newtheorem{example}[definition]{Example}
 \newtheorem{convention}[definition]{Convention}
\theoremstyle{plain}      
 \newtheorem{proposition}[definition]{Proposition}
 \newtheorem{theorem}[definition]{Theorem}
 \newtheorem{lemma}[definition]{Lemma}
\newtheorem*{theorem*}{Theorem}
\newtheorem{claim}{Claim}
\setlist[description]{leftmargin=0cm}
\newcommand{\eps}{\varepsilon}
\newcommand{\minus}{\setminus}
\newcommand{\G}{\Gamma}
\newcommand{\N}{\mathbf N}
\newcommand{\dX}{\partial X}
\newcommand{\bgamh}{\partial (\Gamma, \mathcal{H})}
\DeclareMathOperator{\Isom}{Isom}
\DeclareMathOperator{\Stab}{Stab}
\DeclareMathOperator{\CT}{CT}
\DeclareMathOperator{\Hom}{Hom}
\DeclareMathOperator{\Homeo}{Homeo}
\DeclareMathOperator{\Cay}{Cay}
\DeclareMathOperator{\SL}{SL}
\newcommand{\CP}{\mathbb{CP}}
\newcommand{\RP}{\mathbb{RP}}
\newcommand{\C}{\mathbb{C}}
\newcommand{\R}{\mathbb{R}}
\newcommand{\Z}{\mathbb{Z}}
\renewcommand{\H}{\mathbb{H}}
\title{Limits of limit sets in rank-one symmetric spaces}
\author{A. Guilloux\thanks{IMJ-PRG, OURAGAN, Sorbonne Université,
    CNRS, INRIA, \texttt{antonin.guilloux@imj-prg.fr}; work partially funded by ANR-23-CE40-0012-03}  \and
  T. Weisman\thanks{University of Michigan,
    \texttt{tjwei@umich.edu}. Partially supported by NSF grant
    DMS-2202770.}}
\begin{document}

\maketitle

\begin{abstract}
We consider the question of continuity of limit
    sets for sequences of geometrically finite subgroups of isometry
    groups of rank-one symmetric spaces, and prove analogues of
    classical (Kleinian) theorems in this context. In particular we
    show that, assuming strong convergence of the sequence of
    subgroups, the limit sets vary continuously with respect to
    Hausdorff distance, and if the sequence is weakly type-preserving,
    the sequence of Cannon-Thurston maps also converges uniformly to a
    limiting Cannon-Thurston map. Our approach uses the theory of
    extended geometrically finite representations, developed recently
    by the second author.
\end{abstract}

\section{Introduction}

Let $X$ be a noncompact rank-one symmetric space, for example real,
complex, or quaternionic hyperbolic space. Consider a finitely
generated group $\G$, and let $\rho: \G\to \Isom(X)$ be a discrete
faithful representation. Its image $\rho(\Gamma)$ has a limit set,
denoted $\Lambda_\rho$:
\begin{definition}[Limit set, see e.g. {\cite[Chapter 7]{DSU}}]
  The limit set $\Lambda_G$ of a subgroup $G\subset \Isom(X)$ is the
  set of accumulation points in $\partial X$ of any $G$-orbit in
  $X$. It is a compact subset of $\partial X$, and any $G$-invariant
  closed subset of $\Isom(X)$ with cardinality at least $2$ contains
  $\Lambda_G$.
\end{definition}

The limit set has classically received a great deal of attention in
this setting, see e.g. \cite{CorletteIozzi}.  When deforming $\rho$ to
a nearby representation $\rho'$, the limit set will vary. The
continuity of the assignment $\rho' \mapsto \Lambda_{\rho'}$ (with
respect to the Hausdorff topology on compact subsets of $\partial X$)
is not guaranteed, even in the classical setting where $X$ is
3-dimensional real hyperbolic space $\H^3_\R$. A crucial observation
is that the notion of convergence of representations with respect to
the compact-open topology on maps $\G \to \Isom(X)$ (usually called
\emph{algebraic convergence}) is not by itself enough to guarantee
convergence of the limit sets; the convergence $\rho_n \to \rho$
should also be \emph{geometric} (see
\Cref{def:strong_convergence}). The combination of algebraic and
geometric convergence is called \emph{strong} convergence.

When $X = \H^3_\R$, several different results relate the convergence
of limit sets $\Lambda_{\rho_n} \to \Lambda$ to strong convergence
$\rho_n \to \rho$; see e.g. \cite{JorgensenMarden,
  McMullen,AndersonCanary,Evans}, or refer to \cite{Marden} for a
survey. We prove the following result for general rank-one $X$, in the
situation where the limiting representation $\rho$ is
\emph{geometrically finite}:
\begin{theorem}[Convergence of limit sets]
  \label{thm:theorem_1}
  Let $\G$ be a finitely generated group, let $X$ be a rank-one
  symmetric space, and let $\rho:\G \to \Isom(X)$ be a faithful
  geometrically finite representation. Let $(\rho_n)$ be a sequence of
  faithful representations of $\G$ such that $\rho_n$ converges
  strongly to $\rho$.

  Then, for all sufficiently large $n$, $\rho_n$ is geometrically
  finite, and the limit sets $\Lambda_{\rho_n}$ converge to
  $\Lambda_\rho$ with respect to Hausdorff distance on $\partial X$. 
\end{theorem}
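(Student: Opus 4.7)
The plan is to run the argument through the framework of extended geometrically finite (EGF) representations due to the second author. Since $\rho$ is faithful and geometrically finite, it is in particular EGF with respect to its natural peripheral structure $\mathcal{H}$: it admits a continuous $\G$-equivariant extension $\phi: \bgamh \to \partial X$ whose image coincides with $\Lambda_\rho$, where $\bgamh$ denotes the Bowditch boundary of the relatively hyperbolic pair $(\Gamma,\mathcal{H})$.

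The first step is to apply the stability theorem for EGF representations to the algebraically convergent sequence $\rho_n \to \rho$, producing continuous equivariant extensions $\phi_n: \bgamh \to \partial X$ for every sufficiently large $n$. Then, combining the EGF property with the faithfulness of $\rho_n$ and the geometric side of strong convergence (which controls the injectivity radius of the quotient away from cusps and prevents collapsing), I would deduce that $\rho_n$ is genuinely discrete and geometrically finite, with $\phi_n(\bgamh) = \Lambda_{\rho_n}$.

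The heart of the proof is to upgrade stability of the boundary extensions to \emph{uniform} convergence $\phi_n \to \phi$ on the compact space $\bgamh$. Uniform convergence of continuous maps on a compact domain implies Hausdorff convergence of their images, and the images in question are exactly the limit sets, so this closes the argument. To establish uniform convergence I would split $\bgamh$ into its conical locus and its parabolic locus. On the conical part, equivariance together with algebraic convergence of loxodromic elements and their (attracting) fixed points is enough. On the parabolic part, the strategy is to use geometric convergence to compare cusp neighborhoods for $\rho_n$ and $\rho$, transporting the comparison back to $\partial X$ through horospherical coordinates and the EGF machinery.

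The main obstacle will be precisely this last point: parabolic fixed points of $\rho$ need not persist as parabolic fixed points of $\rho_n$, and even when the peripheral subgroups remain parabolic, horoballs can open, shrink, or tilt, pulling nearby boundary data in subtle ways. Guaranteeing uniform control of $\phi_n$ in arbitrarily small neighborhoods of the parabolic locus -- uniformly in $n$ -- is the step where the full force of \emph{geometric}, as opposed to merely algebraic, convergence must be invoked; everything else reduces to standard compactness and equivariance arguments once this technical point is handled.
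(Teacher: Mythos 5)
There is a genuine gap at the very first step. The EGF stability theorem (\Cref{thm:relative_stability_theorem}) is \emph{not} a statement about algebraically convergent sequences: its hypothesis is that the deformations lie in a \emph{peripherally stable} subspace of $\Hom(\G,\Isom(X))$ (\Cref{def:peripheral_stability}), and algebraic convergence alone is known to be insufficient even for $X=\H^3_\R$ (this is exactly why the theorem assumes strong convergence). Your proposal applies the stability theorem to the algebraically convergent sequence and only invokes the geometric half of strong convergence later, to ``prevent collapsing'' and to control the parabolic locus. But the geometric convergence has to be spent \emph{before} the stability theorem can be applied at all: the actual content of the paper's argument is the equivalence, for faithful sequences, between strong convergence, relative strong convergence, and peripheral stability (\Cref{prop:strong_relative_convergence}), whose proof occupies all of \Cref{sec:strong-peripheral} (the CAT($-1$) projection lemmas in one direction, the relative automaton in the other). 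Once that equivalence is in hand, geometrical finiteness of $\rho_n$ follows from \Cref{prop:egf_nilpotent_gf} (EGF with virtually nilpotent peripherals), not from an injectivity-radius argument. Your proposal names no mechanism by which geometric convergence feeds into the EGF machinery, and that mechanism is the theorem's main difficulty.

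A second, more fixable problem is the formulation of the final step. You posit continuous equivariant maps $\phi_n:\bgamh\to\partial X$ and aim for uniform convergence $\phi_n\to\phi$. But the boundary extension in the EGF framework goes the other way, $\phi_n:\Lambda_{\rho_n}\to\bgamh$, and it need not be injective: under deformation a $\rho$-parabolic subgroup may become loxodromic for $\rho_n$ (only the inclusion $\mathcal H_n\subseteq\mathcal H$ is guaranteed), in which case the fiber of $\phi_n$ over that parabolic point of $\bgamh$ is a two-point set and no single-valued equivariant map $\bgamh\to\Lambda_{\rho_n}$ exists. The correct statement is semicontinuity of fibers --- $\phi_n^{-1}(Z)$ lies in an $\eps$-neighborhood of $\phi^{-1}(Z)$ --- and Hausdorff convergence of the limit sets is then extracted by covering $\bgamh$ with finitely many compact sets $Z$ whose $\phi$-preimages have diameter at most $\eps$, rather than by uniform convergence of maps. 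Your instinct that the parabolic locus is where geometric convergence must be used is correct, but as written both the entry point of that hypothesis and the convergence statement you would prove are off.
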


\begin{remark}\
  \begin{enumerate}[label=(\alph*)]
  \item When $X = \H_\R^3$, \Cref{thm:theorem_1} follows from work of
    J\o rgensen-Marden \cite{JorgensenMarden}, and when $X = \H^n_\R$,
    the theorem is covered by a result of McMullen \cite[Thm.
    4.1]{McMullen}. In both of these results, the hypothesis of strong
    convergence $\rho_n \to \rho$ can be relaxed to one of
    \emph{relative} strong convergence. We are also able to relax the
    hypothesis in this way; see
    \Cref{prop:strong_relative_convergence}.
  \item Again in the case $X = \H_\R^3$, J\o rgensen-Marden proved a
    partial converse to \Cref{thm:theorem_1}, and in this case
    McMullen also proved results implying continuity of the Hausdorff
    dimension of the sequence of limit sets under certain
    circumstances. We will not pursue either of these directions in
    this paper.
  \item The hypothesis in \Cref{thm:theorem_1} that each $\rho_n$ is
    faithful is likely unnecessary. Indeed, McMullen's result for
    $\H^n_\R$ does not make this assumption (although his proof does
    assume that each $\rho_n(\Gamma)$ is torsion-free). We expect to
    explore this case further in future work.
  \end{enumerate}
\end{remark}

\subsection{Cannon-Thurston maps}

In \cite{Mj-Series}, Mj-Series proved another version of continuity
for limit sets of sequences of geometrically finite representations in
$\H_\R^3$, in terms of uniform continuity of the associated
\emph{Cannon-Thurston maps}. In general, when $\Gamma_1 \to \Gamma_2$
is an isomorphism of Kleinian groups, a \emph{Cannon-Thurston map} is
a continuous equivariant map
$\Lambda_{\Gamma_1} \to \Lambda_{\Gamma_2}$ between the limit sets of
$\Gamma_1$ and $\Gamma_2$.

If $\Gamma_1$ and $\Gamma_2$ are arbitrary Kleinian groups, then such
a map is not guaranteed to exist. However, there is always a
Cannon-Thurston map $\Lambda_{\Gamma_1} \to \Lambda_{\Gamma_2}$ if
$\Gamma_1, \Gamma_2$ are geometrically finite and the isomorphism
$\Gamma_1 \to \Gamma_2$ is \emph{weakly type-preserving}, meaning
every parabolic element in $\Gamma_1$ is taken to a parabolic element
of $\Gamma_2$. One says that a sequence $(\rho_n)$ of faithful
geometrically finite representations $\Gamma \to \Isom(\H^3_\R)$ is
weakly type-preserving if each isomorphism
$\rho_n \circ \rho_{1}^{-1}$ is weakly type-preserving (see also
\Cref{defn:type_preserving}); in this case there is a sequence of
Cannon-Thurston maps $\Lambda_{\rho_1} \to \Lambda_{\rho_n}$ between
the limit sets of these representations.

We prove the following uniform continuity result for Cannon-Thurston
maps associated to geometrically finite representations:
\begin{theorem}[Convergence of Cannon-Thurston maps]
  \label{thm:theorem_2}
  Let $X$ be a rank-one symmetric space, let $\G$ be a finitely
  generated group, and let $(\rho_n)_{n \in \N}$ be a weakly
  type-preserving sequence of faithful geometrically finite
  representations, converging relatively strongly to a geometrically
  finite representation $\rho$.

  Then the sequence of Cannon-Thurston maps
  $\CT_{1,n}:\Lambda_{\rho_1} \to \Lambda_{\rho_n}$ exists and
  converges uniformly to a Cannon-Thurston map
  $\CT_{1,\infty}:\Lambda_{\rho_1} \to \Lambda_\rho$.
\end{theorem}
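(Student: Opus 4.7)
The plan is to factor every Cannon--Thurston map through a common compact model and reduce the problem to uniform convergence of boundary-extension maps. Let $\mathcal{H}$ be the collection of maximal parabolic subgroups of $\rho_1(\G)$, pulled back to $\G$ via $\rho_1^{-1}$. Geometric finiteness of $\rho_1$ makes $(\G, \mathcal{H})$ a relatively hyperbolic pair with Bowditch boundary $\bgamh$, and the weakly type-preserving hypothesis ensures that each $H \in \mathcal{H}$ remains parabolic under every $\rho_n$ and under $\rho$. For each such geometrically finite representation, standard results on convergence group actions of geometrically finite subgroups of $\Isom(X)$ yield a continuous, $\G$-equivariant homeomorphism $\phi_n \colon \bgamh \to \Lambda_{\rho_n} \subset \dX$ (respectively $\phi_\infty \colon \bgamh \to \Lambda_\rho$). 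In particular, the Cannon--Thurston map $\CT_{1,n}$ is the composition $\phi_n \circ \phi_1^{-1} \colon \Lambda_{\rho_1} \to \Lambda_{\rho_n}$, and $\CT_{1,\infty} = \phi_\infty \circ \phi_1^{-1}$. Thus it suffices to show $\phi_n \to \phi_\infty$ uniformly on the compact space $\bgamh$, measured using a visual metric on $\dX$.

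I would first establish pointwise convergence $\phi_n(\xi) \to \phi_\infty(\xi)$ for each $\xi \in \bgamh$. When $\xi$ is a bounded parabolic point fixed by some $H \in \mathcal{H}$, the values $\phi_n(\xi)$ and $\phi_\infty(\xi)$ are the unique parabolic fixed points of $\rho_n(H)$ and $\rho(H)$, so algebraic convergence $\rho_n(h) \to \rho(h)$ for nontrivial $h \in H$ forces $\phi_n(\xi) \to \phi_\infty(\xi)$ by continuity of the fixed-point assignment on parabolic isometries. When $\xi$ is a conical limit point witnessed by a sequence $\gamma_k \in \G$, a diagonal argument combining relatively strong convergence with the Hausdorff convergence $\Lambda_{\rho_n} \to \Lambda_\rho$ provided by \Cref{thm:theorem_1} lets one interchange the limits in $n$ and $k$ and conclude the same.

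The main obstacle is equicontinuity of the family $\{\phi_n\} \cup \{\phi_\infty\}$, since Arzelà--Ascoli together with pointwise convergence and uniqueness of the equivariant boundary extension would then force uniform convergence of the whole sequence. Away from the cusps, equicontinuity follows from the convergence-group dynamics on the thick part: geometric finiteness provides, for any subset of $\bgamh$ bounded away from the bounded parabolic points, an element of $\G$ expanding distances by a definite factor, and strong convergence makes these expansion constants uniform across all $\rho_n$. The delicate step is uniform-in-$n$ control near each cusp: for every bounded parabolic point $\xi$ with stabilizer $H$ and every $\eps > 0$, a single visual neighborhood of $\xi$ in $\bgamh$ should be sent by every $\phi_n$ (and by $\phi_\infty$) into the visual $\eps$-ball around $\phi_n(\xi)$ in $\dX$. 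This should follow from convergence of the parabolic horoballs of $\rho_n(H)$, itself a consequence of geometric convergence of $\rho_n(\G)$ together with algebraic convergence on the peripheral subgroup $H$; here I would package the argument via the theory of extended geometrically finite representations alluded to in the abstract, which is designed to produce precisely such a continuous family of boundary extensions.
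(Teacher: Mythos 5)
There is a genuine gap, and it sits at the very first step of your reduction. You take $\mathcal{H}$ to be the collection of $\rho_1$-parabolic subgroups and assert that for every $n$ there is a $\G$-equivariant \emph{homeomorphism} $\phi_n \colon \partial(\G,\mathcal{H}) \to \Lambda_{\rho_n}$. This is false in general: the weakly type-preserving hypothesis only gives $\mathcal{H}_1 \subseteq \mathcal{H}_n$, so $\rho_n$ may have strictly more parabolic subgroups than $\rho_1$ (the classical example is a loxodromic element of $\rho_1(\G)$ becoming parabolic under $\rho_n$). In that case $\Lambda_{\rho_n}$ is equivariantly homeomorphic to $\partial(\G,\mathcal{H}_n)$, which is a proper quotient of $\partial(\G,\mathcal{H}_1)$: the two fixed points of the newly parabolic subgroup get identified. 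So the map from $\partial(\G,\mathcal{H}_1)$ to $\Lambda_{\rho_n}$ is only a continuous equivariant surjection, and its existence is essentially the existence of $\CT_{1,n}$ itself, which is part of what the theorem asks you to prove; it does not follow from the ``standard'' identification $\partial(\G,\mathcal{H}_n) \cong \Lambda_{\rho_n}$ alone, but needs the fact that $\rho_1$ is EGF with respect to the coarser structure $\mathcal{H}_n$ (the relativization step, \Cref{prop:egf_nilpotent_gf}). The reduction to uniform convergence of the $\phi_n$ on a fixed compact model survives once ``homeomorphism'' is weakened to ``continuous surjection,'' but the model the paper actually uses is $\partial(\G,\mathcal{H})$ for $\mathcal{H}$ the peripheral structure of the \emph{limit} $\rho$ (which eventually contains every $\mathcal{H}_n$ by \Cref{thm:limit_limit_sets}), together with the factorization $\CT_{1,\infty} = \CT_{n,\infty}\circ\CT_{1,n}$ proved by a density-of-orbits uniqueness argument.

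The second, larger gap is that the two hard analytic steps of your Arzel\`a--Ascoli scheme are deferred rather than proved. Pointwise convergence at conical limit points via ``interchanging limits'' requires control on how fast $\rho_n(\gamma_k)$ contracts toward $\phi_n(\xi)$ \emph{uniformly in $n$}; Hausdorff convergence of the limit sets does not supply this. Likewise, equicontinuity near the cusps uniformly in $n$ is exactly the content of peripheral stability, and your appeal to ``the theory of EGF representations, which is designed to produce precisely such a continuous family'' assumes the conclusion of \Cref{thm:relative_stability_theorem} rather than deriving the theorem from it. The paper's route is shorter and avoids both issues: cover $\bgamh$ by finitely many compacta $Z$ with $\mathrm{diam}\,\phi^{-1}(Z) \le \epsilon$, apply the relative stability theorem (available because relative strong convergence implies peripheral stability, \Cref{prop:strong_relative_convergence}) to get $\phi_{n,\infty}^{-1}(Z)$ inside an $\epsilon$-neighborhood of $\phi^{-1}(Z)$ for large $n$, and observe that $\CT_{1,n}(x)$ and $\CT_{1,\infty}(x)$ land in these two sets for a common $Z$. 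This gives uniform closeness directly, with no equicontinuity and no pointwise case analysis. If you want to salvage your outline, replace the common model by $\partial(\G,\mathcal{H})$, prove the factorization identity, and let the stability theorem do the uniform estimate.
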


In the special case $X = \H^3_\R$, \Cref{thm:theorem_2} exactly
recovers the aforementioned result of Mj-Series (see
\cite[Thm. A]{Mj-Series}).

\begin{remark}
  One consequence of our proof of \Cref{thm:theorem_1} will be that,
  if $(\rho_n)_{n \in \N}$ is a sequence of faithful representations
  converging strongly to a geometrically finite representation $\rho$,
  then a subsequence of $(\rho_n)$ is weakly type-preserving (see
  \Cref{prop:subseq_type_preserve}). Thus the context of
  \Cref{thm:theorem_2} is no more restricted than that of
  \Cref{thm:theorem_1}.
\end{remark}

\subsection{Proof strategy}

Our proof of \Cref{thm:theorem_1} and \Cref{thm:theorem_2} does not
use conformal dynamics, but rather relies on the notion of
\emph{extended geometrically finite} (EGF) representations and
\emph{peripherally stable} deformations developed by the second author
in \cite{Weisman-EGF}. Interestingly, these notions were originally
developed to deal with generalizations of geometrical finiteness in
higher-rank semisimple Lie groups, but we show in this paper that the
techniques also apply fruitfully in rank one. In particular, the
approach gives an alternative proof of the classical (Kleinian)
versions of our main theorems.

The organisation of the paper is as follows. In the following
\Cref{sec:GF-EGF} we present the definition of EGF representations in
the context of a rank-one symmetric space $X$, and explain the
relation to the usual definition of geometrical finiteness. Even in
the rank-one case, the two notions are not exactly identical, but they
are close enough to be essentially equivalent for our concerns.

In \Cref{sec:peripheral}, we discuss deformations of EGF
representations, and present the notion of a peripherally stable
subspace. Here we recall the main theorem of \cite{Weisman-EGF} (see
\Cref{thm:relative_stability_theorem}), which states that small
deformations of EGF representations in peripherally stable subspaces
are still EGF, and that their limit sets deform (semi-)continuously in
these subspaces. In this section we also briefly explain the
connection between peripheral stability and a \emph{relative
  automaton}, an important technical tool developed in
\cite{Weisman-EGF} which is useful in later sections of this paper.

In \Cref{sec:strong-peripheral}, we give some reminders about the
Chabauty topology, and then prove the equivalence between peripheral
stability, strong convergence, and relative strong convergence
(\Cref{prop:strong_relative_convergence}). This is the most technical
section of the paper; once we have established this equivalence, we
are able to give quick proofs of \Cref{thm:theorem_1} and
\Cref{thm:theorem_2} in \Cref{ssec:convergencetheorems}, as
corollaries of \Cref{thm:relative_stability_theorem}.

\section{Geometrical finiteness and extended geometrical finiteness}\label{sec:GF-EGF}


Let $\G$ be a finitely generated group and let $\rho:\G \to \Isom(X)$
be a geometrically finite representation. In this situation, $\G$ is a
relatively hyperbolic group (see \cite{Bowditch-RelHypGroups}),
relative to its collection $\mathcal{H}$ of maximal parabolic
subgroups, i.e. the set of stabilizers of points in $\partial X$ fixed
by a parabolic isometry in $\rho(\G)$. We say that $(\G, \mathcal{H})$
is a \emph{relatively hyperbolic pair}. Recall that such a pair is
equipped with its \emph{Bowditch boundary} $\partial(\G,\mathcal H)$,
which in this situation is equivariantly homeomorphic to the limit set
of $\rho(\G)$ (see the beginning of \Cref{sec:egf_gf_rep}).

The second author developed the notion of extended geometrical
finiteness (or EGF) in \cite{Weisman-EGF} for representations of a
general relatively hyperbolic group $(\G,\mathcal H)$ into a
semisimple Lie group $G$. One goal of this section is to prove that,
when $G$ is a rank-one Lie group, and all of the peripheral subgroups
$H\in\mathcal H$ of $\Gamma$ are virtually nilpotent, then EGF
representations of $\G$ are precisely the same thing as geometrically
finite representations (see \Cref{prop:egf_nilpotent_gf} below). We
will also explain the connection between the equivariant homeomorphism
$\partial(\G, \mathcal{H}) \simeq \Lambda_\rho$ and the objects
appearing in the definition of an EGF representation.

In general it is possible to construct EGF representations which are
not geometrically finite (see
\Cref{ex:geometrically_infinite_egf}). However, in the context of this
paper, the peripheral subgroups of our relatively hyperbolic group
$\G$ will always be virtually nilpotent, and so EGF representations
and geometrically finite representations will be equivalent. The main
reason we work with EGF representations is that they come equipped
with some extra structure which turns out to be very useful when
considering the behavior of deformations of geometrically finite
groups.

\subsection{EGF representations in rank-one symmetric spaces}

Below, we give the definition of an EGF representation
$\rho:\Gamma \to \Isom(X)$ when $\Gamma$ is a relatively hyperbolic
group and $X$ is a rank-one symmetric space. The full definition of an
EGF representation into an arbitrary-rank semisimple Lie group $G$ is
more complicated. That level of generality is not relevant for this
paper, so we refer the interested reader to \cite{Weisman-EGF} for
details.

\begin{definition}
  \label{defn:egf_definition}
  Let $(\Gamma, \mathcal H)$ be a relatively hyperbolic pair, with
  Bowditch boundary $\bgamh$. A representation
  $\rho:\Gamma \to \Isom(X)$ is \emph{extended geometrically finite}
  if there is a closed $\rho$-invariant subset $\Lambda \subset \dX$
  and a $\rho$-equivariant map $\phi:\Lambda \to \bgamh$ (called a
  \emph{boundary extension}) satisfying the following condition:

  For any sequence $\gamma_n \in \G$ satisfying
  $\gamma_n^{\pm 1} \to z_{\pm} \in \bgamh$, any compact subset
  $K \subset \dX \minus \phi^{-1}(z_-)$, and any open subset
  $U \subset \dX$ containing $\phi^{-1}(z_+) \subset U$, we have
  $\rho(\gamma_n)K \subset U$ for all sufficiently large $n$.
\end{definition}

\begin{remark}
  The Bowditch boundary of a relatively hyperbolic group is really an
  invariant of a relatively hyperbolic \emph{pair}, i.e. a relatively
  hyperbolic group together with a choice of a collection of
  peripheral subgroups (which is not always uniquely determined). So
  the definition of an EGF representation depends on the choice of
  collection $\mathcal H$. This is important in this paper, since
  sometimes we will want to consider different peripheral structures
  on the same group.
\end{remark}

\Cref{defn:egf_definition} is our primary definition of
an EGF representation in this paper because it is fairly close to the
original definition given in \cite{Weisman-EGF}. However, in our
current setting, there is an alternative formulation of the definition
given in terms of convergence group actions. Recall that, if $M$ is a
Hausdorff space, an action $\Gamma \to \Homeo(M)$ is a \emph{discrete
  convergence action} if, for every sequence of pairwise distinct
elements $\gamma_n \in \Gamma$, one can extract a subsequence and find
points $a, b \in M$ so that the restrictions
$\gamma_n|_{M \minus \{a\}}$ converge to the constant map $b$,
uniformly on compacts in $M \minus \{a\}$.

\begin{proposition}
  \label{prop:egf_convergence_definition}
  Suppose that $(\Gamma, \mathcal H)$ is a relatively hyperbolic
  pair. A representation $\rho:\Gamma \to \Isom(X)$ is EGF if and only
  if there is a closed invariant set $\Lambda \subset \dX$ and a
  $\rho$-equivariant map $\phi:\Lambda \to \bgamh$ such that the
  induced action on the space $\dX/\sim_{\phi}$ is a discrete
  convergence action, where $\sim_{\phi}$ is the equivalence relation
  identifying points in the same fiber of $\phi$.
\end{proposition}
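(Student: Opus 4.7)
The plan is to observe that both sides of the equivalence share the same data (the closed invariant set $\Lambda$ and the equivariant map $\phi : \Lambda \to \bgamh$), so the task is purely to translate between the two dynamical conditions via the quotient map $q : \dX \to \dX/\sim_\phi$. As preparation, I would first record that since $\Lambda$ is closed, $\phi$ is continuous, and $\dX$ is compact, the graph of $\sim_\phi$ is closed in $\dX \times \dX$, so $\dX/\sim_\phi$ is compact Hausdorff and $q$ is a closed continuous surjection. Moreover, $\phi$ descends to an equivariant continuous bijection $\bar\phi : \Lambda/\sim_\phi \to \bgamh$ (surjectivity of $\phi$ is implicitly forced by the EGF condition), and as a continuous bijection between compact Hausdorff spaces it is a homeomorphism intertwining the $\rho$-action on $\Lambda/\sim_\phi$ with the standard convergence action of $\G$ on $\bgamh$.

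For the forward direction, I would take a sequence of distinct $\gamma_n \in \G$; since $\G$ acts as a convergence group on $\bgamh$, pass to a subsequence with $\gamma_n^{\pm 1} \to z_\pm \in \bgamh$. I then claim that the quotient convergence dynamics for this subsequence has repeller $[z_-] := q(\phi^{-1}(z_-))$ and attractor $[z_+] := q(\phi^{-1}(z_+))$. Given a compact $K' \subset \dX/\sim_\phi \minus \{[z_-]\}$ and an open $V' \ni [z_+]$, I lift to $K := q^{-1}(K') \subset \dX \minus \phi^{-1}(z_-)$ (compact because $q$ is closed) and $U := q^{-1}(V') \supset \phi^{-1}(z_+)$ (open because $q$ is continuous); EGF then gives $\rho(\gamma_n)K \subset U$, which pushes down to $\rho(\gamma_n)K' \subset V'$.

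For the backward direction, starting from a sequence $\gamma_n$ with $\gamma_n^{\pm 1} \to z_\pm$ in $\bgamh$, together with compact $K \subset \dX \minus \phi^{-1}(z_-)$ and open $U \supset \phi^{-1}(z_+)$, I need to show $\rho(\gamma_n)K \subset U$ eventually. The main obstacle, and the step I expect to require the most care, is identifying the attractor-repeller pair of the quotient dynamics with $([z_-],[z_+])$. After passing to a subsequence the assumed convergence action yields some pair $(a,b)$ for $\rho(\gamma_n)$. Restricting this convergence structure to the closed $\G$-invariant subset $\Lambda/\sim_\phi$ and transporting through $\bar\phi$ produces a convergence structure on $\bgamh$ for the sequence $\gamma_n$; testing against any point of $\bgamh$ other than $z_-$ and comparing with the standard convergence $\gamma_n \to z_+$ forces $b = [z_+]$. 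Applying the same argument to $\rho(\gamma_n^{-1})$ identifies its attractor as $[z_-]$, and since the attractor of $\rho(\gamma_n^{-1})$ is the repeller of $\rho(\gamma_n)$, we get $a = [z_-]$.

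Once the attractor-repeller pair has been pinned down, I would conclude by pushing $K$ and $U$ through $q$. The image $q(K)$ is compact and disjoint from $[z_-] = a$. The set $V' := \dX/\sim_\phi \minus q(\dX \minus U)$ is an open neighborhood of $[z_+] = b$: it is open because $q(\dX \minus U)$ is the $q$-image of a closed (hence compact) set, so closed in the Hausdorff quotient, and $[z_+] \in V'$ because $\phi^{-1}(z_+) \subset U$. The quotient convergence then yields $\rho(\gamma_n) q(K) \subset V'$ eventually, and by construction $q^{-1}(V') \subset U$, so $\rho(\gamma_n)K \subset U$ for large $n$, which is exactly the EGF conclusion.
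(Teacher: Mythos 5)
Your argument is correct in outline, and since the paper gives no proof of this proposition beyond the one-line remark that it ``is a straightforward consequence'' of Tukia's theorem that discrete subgroups of $\Isom(X)$ act as discrete convergence groups on $\dX$, you are supplying details the authors omit --- but by a genuinely different route. The paper's citation points at an argument run \emph{upstairs} on $\dX$: Tukia produces an attractor--repeller pair $(b_0,a_0)\in\dX\times\dX$ for any escaping sequence $\rho(\gamma_n)$, and the content of the equivalence is then that the EGF condition amounts to $a_0\in\phi^{-1}(z_-)$ and $b_0\in\phi^{-1}(z_+)$. You instead work entirely \emph{downstairs}: the forward direction is a purely formal push-forward of the EGF dynamics through the quotient map $q$, and in the backward direction you locate the attractor--repeller pair by restricting to $\Lambda/\sim_{\phi}\cong\bgamh$ and invoking the standard north--south dynamics of $\Gamma$ on its Bowditch boundary. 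Your version never uses Tukia's theorem; what it uses instead is the convergence action of $\Gamma$ on $\bgamh$ (Bowditch/Yaman), which the paper relies on throughout anyway, so nothing is lost and the argument is arguably more self-contained. Two small caveats. First, your setup (Hausdorffness of $\dX/\sim_{\phi}$ via the closed-graph criterion, and $\bar\phi$ being a homeomorphism) requires $\phi$ to be continuous and surjective; \Cref{defn:egf_definition} as printed does not state either property, though both are part of the definition of a boundary extension in \cite{Weisman-EGF} and are clearly intended, so you should flag that you are using them rather than assert that surjectivity is ``implicitly forced.'' Second, in the backward direction the comparison ``test against any point of $\bgamh$ other than $z_-$'' needs a test point also distinct from the image of the repeller, so it silently assumes $\bgamh$ has at least three points; the elementary cases are trivial but should be set aside explicitly.
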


This proposition is a straightforward consequence of the fact that any
discrete subgroup of $\Isom(X)$ acts as a discrete convergence group
on $\dX$ (see \cite{Tukia2}).

\subsection{EGF representations and geometrically finite
  representations}\label{sec:egf_gf_rep}
In sections 6 and 9 of \cite{Bowditch-RelHypGroups}, Bowditch uses the
Beardon-Maskit definition of geometrical finiteness (see
\cite{BeardonMaskit}, \cite{BowditchGF}) to prove the following facts
about any geometrically finite representation of $\Gamma$:
  \begin{itemize}
  \item If $\mathcal H$ is the collection of maximal subgroups of $\G$
    sent by $\rho$ to parabolic subgroups of $\Isom(X)$, then
    $(\Gamma,\mathcal H)$ is a relatively hyperbolic pair.
  \item The limit set of $\rho(\G)$ is canonically homeomorphic to the
    Bowditch boundary $\bgamh$ of the pair.
\end{itemize}
\Cref{prop:egf_convergence_definition} makes it clear that any
geometrically finite representation is also an EGF representation for
this relatively hyperbolic structure: the closed invariant set
$\Lambda$ can be taken to be the limit set of the group
$\rho(\Gamma)$, and $\phi:\Lambda\to \bgamh$ is given by the canonical
homeomorphism. Since there is a preferred choice of peripheral
structure in this case, we make the following definition:

\begin{definition}
  For any representation $\rho : \G\to \Isom(X)$ of a finitely generated group, the collection of the
  \emph{$\rho$-parabolic subgroups} is the collection of maximal
  subgroups of $\G$ sent by $\rho$ to parabolic subgroups of
  $\Isom(X)$.
\end{definition}

With this terminology, if $\rho$ is a geometrically finite
representation, then it is also EGF with respect to the
$\rho$-parabolic subgroups, with a homeomorphic boundary extension. In
fact, the converse also holds:
\begin{theorem}[{See \cite[Theorem 1.10]{Weisman-EGF}}]
  \label{thm:injective_bdry_extension}
  Let $\rho:\G \to \Isom(X)$ be a representation of a finitely
  generated group, and let $\mathcal H$ the collection of
  $\rho$-parabolic subgroups. Then $\rho$ is geometrically finite if
  and only if $(\Gamma,\mathcal H)$ is a relatively hyperbolic pair,
  and $\rho$ is an EGF representation with an injective boundary
  extension $\phi:\Lambda \to \bgamh$. In this case, the set $\Lambda$
  is the limit set of $\rho(\Gamma)$.
\end{theorem}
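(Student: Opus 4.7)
The forward direction ($\Rightarrow$) essentially reassembles the facts about geometrically finite groups recalled at the start of \Cref{sec:egf_gf_rep}. By Bowditch's work \cite{Bowditch-RelHypGroups}, if $\rho$ is geometrically finite then $(\Gamma, \mathcal{H})$ is a relatively hyperbolic pair and there is a canonical equivariant homeomorphism $\phi: \Lambda_\rho \to \bgamh$. The discrete subgroup $\rho(\Gamma)$ acts as a discrete convergence group on $\dX$ by \cite{Tukia2}, so \Cref{prop:egf_convergence_definition} applies with this $\phi$ (which is injective as a homeomorphism) to conclude that $\rho$ is EGF with $\Lambda = \Lambda_\rho$.

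For the converse ($\Leftarrow$), the plan is to first identify $\Lambda$ with the limit set $\Lambda_\rho$, and then invoke a dynamical characterization of geometrical finiteness. Assume $\rho$ is EGF with injective boundary extension $\phi: \Lambda \to \bgamh$. By \Cref{prop:egf_convergence_definition}, $\rho(\Gamma)$ acts as a discrete convergence group on $\dX$, so in particular $\rho(\Gamma)$ is discrete in $\Isom(X)$. The image $\phi(\Lambda)$ is a closed $\Gamma$-invariant subset of $\bgamh$; by the minimality of the $\Gamma$-action on $\bgamh$ (a standard property of the Bowditch boundary of a non-elementary relatively hyperbolic pair), $\phi(\Lambda) = \bgamh$, so $\phi$ is a continuous equivariant bijection between compact Hausdorff spaces, hence an equivariant homeomorphism. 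The same minimality argument applied to $\phi(\Lambda_\rho) \subseteq \bgamh$ yields $\Lambda_\rho = \Lambda$. The degenerate cases, in which $\Gamma$ is finite or $|\bgamh| \leq 2$, can be handled directly.

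To finish, I would apply Bowditch's dynamical criterion from \cite{BowditchGF}: a discrete subgroup of $\Isom(X)$ is geometrically finite if and only if every point of its limit set is either a conical limit point or a bounded parabolic fixed point. By the very definition of $\bgamh$, every $z \in \bgamh$ is conical or bounded parabolic for the $\Gamma$-action on $\bgamh$, with bounded-parabolic stabilizers lying in $\mathcal{H}$. The equivariant homeomorphism $\phi^{-1}: \bgamh \to \Lambda_\rho$ transports this structure onto the $\rho$-action on $\Lambda_\rho$, and because $\mathcal{H}$ is by hypothesis the collection of $\rho$-parabolic subgroups, the stabilizer of each $\phi^{-1}(z)$ maps under $\rho$ to a genuine parabolic subgroup of $\Isom(X)$, which is the condition needed at bounded parabolic points. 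The main obstacle is verifying that the notions of conical limit point and bounded parabolic fixed point are really intrinsic to the abstract convergence action on $\Lambda_\rho$, and so are preserved under the equivariant homeomorphism $\phi$; once this bookkeeping is carried out, Bowditch's criterion yields geometrical finiteness of $\rho$.
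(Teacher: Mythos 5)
Your proposal is essentially sound, but note that the paper does not prove this statement at all: it is imported verbatim as \cite[Theorem 1.10]{Weisman-EGF}, so there is no in-paper argument to compare against. What you have written is a plausible direct proof specialized to rank one, and it is a genuinely different route from the source: Theorem 1.10 of \cite{Weisman-EGF} is proved in the general semisimple setting (where ``geometrically finite'' is replaced by an asymptotic-embedding condition), whereas you exploit the rank-one luxury of the Beardon--Maskit/Bowditch dynamical characterization of geometrical finiteness. Your forward direction is exactly the assembly of facts the paper itself records at the start of \Cref{sec:egf_gf_rep}, and your converse correctly reduces to three ingredients: (a) minimality of the $\G$-action on $\bgamh$ to upgrade $\phi$ to a surjection and to identify $\Lambda$ with $\Lambda_\rho$; (b) the fact that every point of $\bgamh$ is conical or bounded parabolic with peripheral stabilizers; and (c) Bowditch's criterion that a discrete subgroup of $\Isom(X)$ is geometrically finite iff its limit set consists of conical limit points and bounded parabolic fixed points. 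The payoff of your approach is a self-contained rank-one proof; the cost is that it does not generalize, which is presumably why the paper cites the external result instead.

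The step you flag as ``bookkeeping'' is indeed the only real gap, and it is closable but deserves explicit citations rather than a wave of the hand. You need two facts: first, that conical limit points and bounded parabolic points are intrinsic to the topological convergence action on the compactum (so they transport along the equivariant homeomorphism $\phi^{-1}$) --- this is the content of Tukia's and Bowditch's work on convergence groups, and for $\bgamh$ specifically it is Bowditch's construction (every point is conical or bounded parabolic with stabilizer conjugate into $\mathcal H$); second, that for a discrete subgroup of $\Isom(X)$ the dynamical definition of a conical limit point agrees with the geometric one used in the Beardon--Maskit criterion, which is proved in \cite{BowditchGF}. One smaller point to make explicit: at a bounded parabolic point $z$ you must check that $\rho(\Stab_\G(z))$ actually fixes $\phi^{-1}(z)$ and is parabolic in $\Isom(X)$; the first follows from equivariance and injectivity of $\phi$, and the second is exactly where the hypothesis that $\mathcal H$ is the collection of $\rho$-parabolic subgroups enters. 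Also make sure the elementary case ($\rho(\G)$ with limit set of at most two points) is genuinely dispatched, since your minimality argument assumes non-elementarity. With those citations in place the argument is complete.
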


On the other hand, it is in general possible (even in the rank-one
setting) to construct EGF representations whose boundary extensions
are \emph{not} injective. This can occur for two different reasons,
which we cover below.

The first reason that injectivity may fail is essentially an artifact
of the definition: the closed invariant set $\Lambda$ is \emph{not}
uniquely determined by the conditions given in
\Cref{defn:egf_definition}. If $\rho(\Gamma)$ is non-elementary, then
$\Lambda$ must always contain the limit set of $\rho(\Gamma)$, but it
is sometimes also possible to find a larger set which still satisfies
the definition.
\begin{example}
  Consider a geometrically finite Fuchsian group
  $\G\subset \SL(2,\R)\subset\SL(2,\C)$ which is not convex cocompact
  and hence has parabolic elements; $\SL(2,\Z)$ does the trick. Pick
  an invariant system of disjoint horoballs in $\H^2$ centered at the
  parabolic points. Now, identify
  $\H^2\subset \partial \H^3\simeq \CP^1$ with the upper half plane in
  $\C$. Let $\Lambda$ be the union of $\RP^1$ and the horoballs, and
  let $\phi:\Lambda \to \RP^1$ be the map which sends horoballs to
  their center and non-cusp points to themselves.  One may then check
  that $\phi$ is a boundary extension, but $\Lambda$ is not the limit
  set of $\G$ (which is $\RP^1$).
\end{example}

Fortunately, we can safely brush this particular issue aside in the
rank-one setting: it turns out that it always possible to choose the
invariant set $\Lambda$ in \Cref{defn:egf_definition} to be the limit
set of $\rho(\Gamma)$ (see \Cref{prop:good_boundary_extension} below).

Even with this assumption, though, it is still possible that the
boundary extension $\phi$ could fail to be injective. This is a
feature, rather than a bug, in the definition of an EGF
representation: it allows us to take into account the fact that the
chosen collection $\mathcal H$ of peripheral subgroups in $\G$ may
\emph{not} be precisely the same as the collection of $\rho$-parabolic
subgroups. This is especially relevant when we consider deformations
of geometrically finite representations which do \emph{not} preserve
the natural choice of peripheral subgroups, and we want to define
Cannon-Thurston maps between the limit sets of these representations
(see \Cref{sec:ct_maps}).

In some cases, however, even the ``most natural'' choice of peripheral
subgroups gives rise to a boundary extension for an EGF representation
which is still not injective. This will occur precisely when the image
of the EGF representation is a geometrically infinite group.

\begin{example}
  \label{ex:geometrically_infinite_egf}
  Consider a finitely generated geometrically infinite discrete group
  $\Gamma \subset \mathrm{PO}(3,1)$. By including $\mathrm{PO}(3,1)$
  into $\mathrm{PO}(4,1)$, we may view $\Gamma$ as a geometrically
  infinite discrete group preserving an isometrically embedded $\H^3$
  in $\H^4$; the limit set of $\Gamma$ in $\partial \H^4$ is contained
  in the embedded 2-sphere at the boundary of this $\H^3$. Then let
  $\Gamma'$ be a conjugate of $\Gamma$ in $\mathrm{PO}(4,1)$ by some
  isometry taking this 2-sphere completely off of itself.

  Using Klein-Maskit ``ping-pong'' combination theorems (see
  e.g. \cite{Maskit}), one can show that, possibly after replacing
  $\Gamma$ and $\Gamma'$ with finite-index subgroups, the subgroup
  $\langle \Gamma, \Gamma' \rangle \subset \mathrm{PO}(4,1)$ is
  naturally isomorphic to the abstract free product
  $\Gamma * \Gamma'$. The free product is a relatively hyperbolic
  group, relative to the collection of conjugates of
  $\Gamma, \Gamma'$, and one may use the same ping-pong techniques to
  prove that the limit set of $\langle \Gamma, \Gamma' \rangle$
  surjects equivariantly onto the Bowditch boundary of the free
  product, so that the preimage of each parabolic point is the limit
  set of some conjugate of $\Gamma$ or $\Gamma'$. This gives the
  boundary extension for an EGF representation with geometrically
  infinite image.
\end{example}

Since the example above is geometrically infinite,
\Cref{thm:injective_bdry_extension} says that there is no choice of
injective boundary extension for this representation. However, even in
this case, the boundary extension is well-behaved in the sense that
all the non-injectivity occurs at the peripheral subgroups. This is
true in general, due to the following consequence of \cite[Proposition
4.8]{Weisman-EGF}:
\begin{proposition}
  \label{prop:good_boundary_extension}
  Let $(\Gamma, \mathcal H)$ be a relatively hyperbolic pair, let
  $\rho:\G \to \Isom(X)$ be a non-elementary discrete representation,
  and let $\Lambda_\rho$ be the limit set of $\rho(\Gamma)$.

  If $\rho$ is an EGF representation, then there is a unique boundary
  extension $\phi:\Lambda_\rho \to \bgamh$
  . Moreover, for this boundary
  extension:
  \begin{enumerate}
  \item If $z \in \bgamh$ is a conical limit point, then the fiber
    $\phi^{-1}(z)$ is a singleton;
  \item If $z \in \bgamh$ is a parabolic point, then $\phi^{-1}(z)$ is
    the limit set of $\rho(\Stab_{\G}(z))$.
  \end{enumerate}
\end{proposition}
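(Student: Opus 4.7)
The plan is to work through the convergence-action characterization of EGF representations (\Cref{prop:egf_convergence_definition}), reducing the three assertions (existence of a boundary extension on $\Lambda_\rho$, singleton fibers over conical points, and the parabolic-fiber identification) to classical facts about the discrete convergence dynamics of $\rho(\G)$ on $\dX$.

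For existence, take any boundary extension $\phi\colon \Lambda \to \bgamh$ supplied by the EGF hypothesis. Since $\rho(\G)$ is non-elementary, the minimality of $\Lambda_\rho$ as the smallest closed $\rho$-invariant subset of $\dX$ with at least two points gives $\Lambda_\rho \subseteq \Lambda$, so $\phi$ restricts to $\Lambda_\rho$. To verify the convergence condition of \Cref{defn:egf_definition} for $\phi|_{\Lambda_\rho}$, I would use the classical fact that $\rho(\G)$ acts as a discrete convergence group on $\dX$, so that for an escaping sequence $\gamma_n \in \G$ the accumulation of any orbit $\rho(\gamma_n) \cdot x$ lies in $\Lambda_\rho$. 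This ensures that any compact $K \subset \dX \setminus (\phi|_{\Lambda_\rho})^{-1}(z_-)$, even one meeting $\phi^{-1}(z_-) \setminus \Lambda_\rho$, is still swept into neighborhoods of $(\phi|_{\Lambda_\rho})^{-1}(z_+)$.

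For uniqueness and the singleton property at conical points, fix a conical limit point $z \in \bgamh$, so there is a sequence $\gamma_n \in \G$ with $\gamma_n^{-1} \to z$ and $\gamma_n \to z' \neq z$ in $\bgamh$. Applying the EGF condition to this sequence, any two points $x, y \in \phi^{-1}(z)$ would be pushed by $\rho(\gamma_n)$ into arbitrarily small neighborhoods of $\phi^{-1}(z')$; the rank-one convergence dynamics on $\dX$ (north--south behavior for escaping sequences in $\Isom(X)$) then force $x = y$, so $\phi^{-1}(z)$ is a singleton. Uniqueness of the boundary extension follows immediately: any two boundary extensions on $\Lambda_\rho$ agree on the preimage of the dense set of conical limit points in $\bgamh$, and hence everywhere by continuity and equivariance.

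For parabolic fibers, let $z \in \bgamh$ be a parabolic point with stabilizer $P = \Stab_\G(z)$. Equivariance shows $\phi^{-1}(z)$ is a closed $\rho(P)$-invariant subset of $\dX$, so it contains $\Lambda_{\rho(P)}$. The reverse inclusion is the main technical obstacle: one must show every point of $\phi^{-1}(z)$ is a $\rho(P)$-limit point. The natural approach is to apply the EGF convergence condition to escaping sequences $\gamma_n \in P$ (for which $\gamma_n^{\pm 1} \to z$ in $\bgamh$), but the argument is delicate when $\rho(P)$ has complicated parabolic dynamics---for instance higher-rank nilpotent parabolics in complex or quaternionic hyperbolic space, where $\Lambda_{\rho(P)}$ need not be a single point and the simple north--south argument of the conical case no longer applies. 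This is precisely where the paper invokes \cite[Prop. 4.8]{Weisman-EGF}.
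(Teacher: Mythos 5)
Your strategy---reproving everything directly from the convergence dynamics of $\rho(\G)$ on $\dX$---is a different route from the paper, which simply imports from \cite[Prop.~4.8]{Weisman-EGF} a boundary extension with singleton conical fibers and an explicit description of parabolic fibers as closures of accumulation sets, and then checks that these fibers lie in $\Lambda_\rho$ and are determined by $\rho$ (whence uniqueness). As written, however, your argument has genuine gaps. The most serious is the conical-point step: the EGF condition for a sequence with $\gamma_n^{-1}\to z$ only controls compacta $K\subset\dX\setminus\phi^{-1}(z)$, so it says nothing about how $\rho(\gamma_n)$ moves points \emph{of} $\phi^{-1}(z)$; the claim that ``any two points $x,y\in\phi^{-1}(z)$ would be pushed into arbitrarily small neighborhoods of $\phi^{-1}(z')$'' does not follow. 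Worse, your characterization of a conical point ($\gamma_n^{-1}\to z$ and $\gamma_n\to z'$) omits the defining extra condition that $\gamma_n z$ converges to a point \emph{distinct} from the attracting point; without that, the same argument applies verbatim to parabolic points and would ``prove'' their fibers are singletons, contradicting item (2) of the statement. A correct version must locate the repelling point of $\rho(\gamma_n)$ inside $\phi^{-1}(z)$ and use the extra condition to rule out a second point of the fiber.

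On parabolic fibers, the containment $\Lambda_{\rho(\Stab_\G(z))}\subseteq\phi^{-1}(z)$ does not follow merely from $\phi^{-1}(z)$ being closed and $\rho(\Stab_\G(z))$-invariant: a closed invariant set with fewer than two points need not contain the limit set (a single fixed point of a loxodromic, or of a non-elementary subgroup as in \Cref{ex:geometrically_infinite_egf}, is closed and invariant). The reverse inclusion you explicitly defer, and it is exactly the content of \cite[Prop.~4.8]{Weisman-EGF} that the paper cites. Finally, your uniqueness argument needs the union of the (singleton) fibers over conical points to be dense in $\Lambda_\rho$, which is not immediate when parabolic fibers are large; the paper instead gets uniqueness by showing every fiber is individually determined by $\rho$. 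So the proposal correctly identifies the ingredients and where the difficulty lies, but it does not yet constitute a proof.
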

\begin{proof}
  Since $\rho(\Gamma)$ is non-elementary, any closed invariant subset
  of $\dX$ contains $\Lambda_\rho$, so in particular
  $\Lambda_\rho \subset \Lambda$ for any subset $\Lambda$ as in
  \Cref{defn:egf_definition}.

  Proposition 4.8 in \cite{Weisman-EGF} asserts that one can choose $\Lambda$ and the boundary extension $\phi:\Lambda \to \bgamh$ so that, for
  every conical limit point $z \in \bgamh$, $\phi^{-1}(z)$ is a
  singleton. This implies that each such $\phi^{-1}(z)$ is in
  $\Lambda_\rho$: we can always find a sequence $\gamma_n \in \G$ so
  that $\gamma_n \to z$ in the compactification $\G \cup
  \bgamh$. Then, \Cref{defn:egf_definition} implies that for some
  nonempty open subset $U \subset \dX$, the sequence
  $\rho(\gamma_n)\overline{U}$ converges to the singleton
  $\phi^{-1}(z)$ and so this singleton must lie in $\Lambda_\rho$.

  Proposition 4.8 in \cite{Weisman-EGF} also asserts that the boundary
  extension above can be chosen so that, for each parabolic point
  $p \in \bgamh$, the fiber $\phi^{-1}(p)$ consists of accumulation
  points of sequences of a particular form. Precisely, the proposition
  states that there is an open subset $C_p \subset \dX$ so that
  $\phi^{-1}(p)$ is exactly the closure of the set of accumulation
  points of sequences $\rho(\gamma_n)x$, for $x \in C_p$ and
  $\gamma_n$ a sequence of pairwise distinct elements in
  $\Stab_\G(p)$. Now, any such accumulation point must lie in the limit set of
  $\rho(\Stab_\G(p))$.


  Every point in $\bgamh$ is either a conical limit point or a
  parabolic point. So, the two cases above show that the fiber above
  every point in $\bgamh$ is contained in $\Lambda_\rho$, and that
  this fiber is completely determined by the representation $\rho$ and
  satisfies the conditions in the statement of the proposition.
\end{proof}

\begin{convention}
  Since the boundary extension determined by
  \Cref{prop:good_boundary_extension} is unique, for the rest of this
  paper, we will always refer to ``the'' boundary extension for an EGF
  representation when we mean the extension determined by the
  proposition.
\end{convention}

\begin{remark}
  In light of \Cref{prop:good_boundary_extension}, one may well ask
  why the definition of an EGF representation allows for different
  boundary extensions for the same representation of the same group
  with the same peripheral structure. The answer is that the
  uniqueness property in \Cref{prop:good_boundary_extension} is more
  subtle in the higher-rank setting, which makes it harder to
  determine a ``best'' choice for the set $\Lambda$ in the definition.
\end{remark}

\subsection{EGF representations with nilpotent peripheral subgroups}

If a representation $\rho:\Gamma \to \Isom(X)$ is geometrically
finite, then every parabolic subgroup of $\G$ maps to a discrete
subgroup of $\Isom(X)$ fixing a unique point in $\dX$, and is thus
virtually nilpotent.

\begin{proposition}
  \label{prop:egf_nilpotent_gf}
  Let $(\G, \mathcal{H})$ be a relatively hyperbolic pair, such that
  each $H \in \mathcal{H}$ is virtually nilpotent. For any
  representation $\rho:\G \to \Isom(X)$, the following are equivalent:
  \begin{enumerate}[label=(\roman*)]
  \item \label{item:gf_parabolics} The representation
    $\rho:\G \to \Isom(X)$ is geometrically finite, and $\mathcal{H}$
    contains the collection of $\rho$-parabolic subgroups of
    $\G$.
  \item \label{item:egf_h} The representation $\rho$ is EGF with
    respect to $\mathcal{H}$.
  \end{enumerate}
\end{proposition}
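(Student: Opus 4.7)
The plan is to prove the two implications separately, using the convergence-group formulation of \Cref{prop:egf_convergence_definition} together with the canonical boundary extension $\phi \colon \Lambda_\rho \to \bgamh$ provided by \Cref{prop:good_boundary_extension}.

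For (i) $\Rightarrow$ (ii), I would begin by recalling that a geometrically finite $\rho$ is automatically EGF with respect to the minimal peripheral structure $\mathcal{H}_0$ consisting of $\rho$-parabolic subgroups, with a homeomorphic boundary extension $\phi_0 \colon \Lambda_\rho \to \partial(\G, \mathcal{H}_0)$. Since $(\G, \mathcal{H})$ is also relatively hyperbolic and $\mathcal{H} \supseteq \mathcal{H}_0$, standard theory of relatively hyperbolic groups yields a $\G$-equivariant continuous surjection $\pi \colon \partial(\G, \mathcal{H}_0) \to \bgamh$ collapsing the limit set of each extra peripheral subgroup to the corresponding parabolic point. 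Setting $\phi := \pi \circ \phi_0$, I would verify the EGF condition by passing $\gamma_n$ with $\gamma_n^{\pm 1} \to z_\pm$ in $\bgamh$ to a subsequence whose $\rho$-image has convergence-group dynamics on $\dX$ with source $x_-$ and sink $x_+$, and using the equivariance of $\phi$ to conclude $\phi(x_\pm) = z_\pm$, which immediately yields the required compact-open convergence in $\dX$.

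For (ii) $\Rightarrow$ (i), I would reduce geometrical finiteness to Bowditch's dynamical criterion, that $\rho$ is geometrically finite if and only if $\rho(\G)$ is a discrete convergence group on $\dX$ whose limit points are all conical or bounded parabolic. Discreteness follows from \Cref{prop:egf_convergence_definition} together with Tukia's correspondence between discrete subgroups of $\Isom(X)$ and discrete convergence actions on $\dX$. I would then classify points $x \in \Lambda_\rho$ by the type of $\phi(x)$ in $\bgamh$. If $\phi(x)$ is conical in $\bgamh$, its fiber is a singleton by \Cref{prop:good_boundary_extension}(1), and a conical-dynamics sequence for $\phi(x)$ in $\bgamh$ transports through the EGF condition to one witnessing conicality of $x$ in $\dX$. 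If $\phi(x) = p$ is parabolic with stabilizer $H \in \mathcal{H}$, the virtual nilpotence hypothesis becomes essential: $\rho(H)$ is a discrete virtually nilpotent subgroup of $\Isom(X)$, so by the rank-one Margulis classification it is either parabolic with one fixed point, or virtually cyclic loxodromic with two fixed points; the finite case is excluded because $\phi^{-1}(p) = \Lambda_{\rho(H)}$ cannot be empty. In the parabolic subcase, I would establish that the unique fixed point $p^*$ is bounded parabolic by transferring cocompactness of the $H$-action on $\bgamh \setminus \{p\}$ to $\Lambda_\rho \setminus \{p^*\}$ through $\phi$. In the loxodromic subcase, both fixed points of $\rho(H)$ are already conical, as fixed points of loxodromic elements of $\rho(H)$ themselves. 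Finally, to confirm $\mathcal{H} \supseteq \mathcal{H}_0$, I would take any $\rho$-parabolic subgroup $K$ with fixed point $q \in \dX$ and show $\phi(q)$ must be parabolic in $\bgamh$: if not, $\Stab_\G(\phi(q))$ would be virtually cyclic generated by an element $\alpha$ acting loxodromically on $\bgamh$, and equivariance would force $\rho(\alpha)$ to fix two distinct points of $\dX$, which is incompatible with $\rho(K) \subseteq \rho(\Stab_\G(\phi(q)))$ being parabolic.

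The main obstacle is the bounded parabolic argument in the reverse direction: one must transfer cocompactness of the Bowditch dynamics of $H$ on $\bgamh \setminus \{p\}$ to cocompactness of $\rho(H)$ on $\Lambda_\rho \setminus \{p^*\}$ even though $\phi$ may fail to be injective over other parabolic points. The virtual nilpotence hypothesis is precisely what closes the fiber analysis, through the standard structure theory of discrete virtually nilpotent subgroups of rank-one isometry groups.
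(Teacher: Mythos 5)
Your overall architecture is viable but it diverges substantially from the paper's, and in the harder direction it leaves the crux unproven. The paper's proof of (i)~$\Rightarrow$~(ii) is a two-line citation: geometrical finiteness gives an EGF structure with respect to the $\rho$-parabolic subgroups (\Cref{thm:injective_bdry_extension}), and Wang's relativization theorem enlarges the peripheral structure to $\mathcal H$. You instead rebuild the relativization step by hand, via the quotient map $\pi\colon \partial(\G,\mathcal H_0)\to\bgamh$ collapsing the endpoint pairs of the extra (necessarily virtually cyclic, loxodromic) peripherals, and then verify \Cref{defn:egf_definition} dynamically; this is correct in outline, though the identification $\phi(x_\pm)=z_\pm$ needs the small extra argument that convergence $\gamma_n\to z_+$ in $\G\cup\bgamh$ forces north--south dynamics on $\bgamh$ compatible, via $\phi$ applied to an orbit, with the source--sink pair on $\dX$. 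For (ii)~$\Rightarrow$~(i) the paper again uses the relativization theorem, this time to \emph{delete} the peripherals with two-point limit set, observes via \Cref{prop:good_boundary_extension} that the resulting boundary extension is injective, and concludes by \Cref{thm:injective_bdry_extension}. You bypass \Cref{thm:injective_bdry_extension} entirely and attempt a direct verification of the Beardon--Maskit/Bowditch criterion (every limit point conical or bounded parabolic); this essentially re-proves the content of that cited theorem, which is why your argument is so much longer and why both the conicality transfer and the bounded-parabolic step appear as loose ends.

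The genuine gap is the step you yourself flag as ``the main obstacle'': you never actually transfer cocompactness of $\Stab_\G(p)$ on $\bgamh\setminus\{p\}$ to cocompactness of $\rho(\Stab_\G(p))$ on $\Lambda_\rho\setminus\{p^*\}$. As it happens, this step is not an obstacle at all, and your worry about non-injectivity of $\phi$ over other parabolic points is misplaced: since $\phi\colon\Lambda_\rho\to\bgamh$ is a continuous equivariant surjection of compact spaces and $\phi^{-1}(p)=\{p^*\}$ in the parabolic subcase, any compact $F\subset\bgamh\setminus\{p\}$ with $\Stab_\G(p)\cdot F=\bgamh\setminus\{p\}$ pulls back to a compact $\phi^{-1}(F)\subset\Lambda_\rho\setminus\{p^*\}$ with $\rho(\Stab_\G(p))\cdot\phi^{-1}(F)=\phi^{-1}(\bgamh\setminus\{p\})=\Lambda_\rho\setminus\{p^*\}$. (One must also rule out a loxodromic element of $\rho(\G)$ fixing $p^*$, but equivariance forces such an element into $\Stab_\G(p)$, whose image is parabolic --- the same argument you use in your final paragraph.) With that repair, and with the conicality-transfer argument written out, your route closes; but you should recognize that you are reconstructing \Cref{thm:injective_bdry_extension} rather than using it, which is exactly what the paper's appeal to the relativization theorem is designed to avoid.
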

\begin{proof}
  First we prove \ref{item:gf_parabolics} $\implies$ \ref{item:egf_h}.
  \Cref{thm:injective_bdry_extension} implies that a geometrically
  finite representation is always extended geometrically finite with
  respect to its collection $\mathcal{H}'$ of parabolic
  subgroups. Then, as $\mathcal{H}' \subseteq \mathcal{H}$, we may
  apply a special case of a relativization theorem of Wang
  \cite[Thm. 1.8]{wang} to see that $\rho$ is also EGF with respect to
  $\mathcal{H}$.
  
  Next, we prove \ref{item:egf_h} $\implies$
  \ref{item:gf_parabolics}. Suppose that $\rho:\G \to \Isom(X)$ is EGF
  with respect to $\mathcal{H}$, with boundary extension
  $\phi:\Lambda \to \bgamh$. The image of each peripheral subgroup
  $H \in \mathcal H$ is a virtually nilpotent discrete subgroup in
  $\Isom(X)$, which means that its limit set $\Lambda(\rho(H))$
  consists of either zero, one, or two points. If the limit set
  contains zero points then $H$ is finite, which is disallowed by the
  definition of an EGF representation (our convention is that all
  peripheral subgroups of a relatively hyperbolic group are
  infinite). So, we know that each $\Lambda(\rho(H))$ contains either
  one or two points.

  If the limit set of $H$ contains two points, then $H$ is virtually
  cyclic and $\rho(H)$ is quasi-isometrically embedded, i.e. convex
  cocompact. Put another way, the restriction of $\rho$ to $H$ is
  geometrically finite with respect to an \emph{empty} collection of
  cusp subgroups of $H$, so we can apply the other direction of the
  relativization theorem cited above (see also \cite[Thm
  1.15]{Weisman-EGF}) to see that $\rho$ is also an EGF representation
  with respect to the peripheral structure
  \[
    \mathcal H' := \{H \in \mathcal H : |\Lambda(\rho(H))| = 1\}.
  \]
  
  \Cref{prop:good_boundary_extension} then implies that the boundary
  extension with respect to this peripheral structure is
  injective. Thus, by \Cref{thm:injective_bdry_extension}, $\rho$ is
  geometrically finite, and the collection
  $\mathcal{H}' \subseteq \mathcal{H}$ is precisely the collection of
  parabolic subgroups of $\G$.
\end{proof}


We have now finished our review of most important properties of EGF
representations and their relation to geometrical finiteness. However,
to prove some of the results in this paper, we need to work with an
additional technical tool: a \emph{relative automaton}.

\subsection{The relative automaton for an (extended) geometrically
  finite representation}\label{sec:automaton_exists}

The automaton discussed in this section is constructed (in a more
general setting) in \cite{Weisman-EGF}, building upon ideas and
results going back to the computational approach to hyperbolic
groups. Below we will state several results regarding the
construction. All of these results also apply in the more general
context, but we will just state versions appropriate for the present
setting.

Fix a finitely generated group $\G$, let $\rho:\G \to \Isom(X)$ be a
geometrically finite representation and consider $\mathcal H$ the
collection of $\rho$-parabolic subgroups.  Let $\Lambda_\rho$ denote
the limit set of $\rho(\Gamma)$. There are finitely many orbits of
parabolic points in $\Lambda_\rho$, so we fix once and for all a
finite subset $\Pi \subset \Lambda_\rho$, containing exactly one point
from each of these parabolic orbits. We also fix a metrization of
$\partial X$; this can be taken to be a visual metric, but the precise
choice does not matter.

\begin{definition}
  A \emph{relative automaton} associated to $\rho$ consists of
  the following data:
  \begin{itemize}
  \item a finite directed graph $\mathcal G$, whose vertex set $Z$ is
    a subset of the limit set of $\G$;
  \item a pair of mappings $z \mapsto W(z)$ and $z \mapsto L(z)$
    defined on $Z$, where $W(z)$ is an open subset of $\dX$ and $L(z)$
    (the \emph{label set}) is a subset of $\G$.
  \end{itemize}
\end{definition}

The main result of Section~6 of \cite{Weisman-EGF} tells us that, for
the geometrically finite representation $\rho$, it is always possible
to construct a relative automaton for $\rho$ which satisfies all of
the following properties:
\begin{enumerate}[label=(A\arabic*)]
\item\label{item:automaton_sets_proper} The closure of each subset
  $W(z)$ is a proper subset of $\dX$.
\item\label{item:automaton_edge_inclusion} There is a fixed
  $\eps > 0$ so that, for each directed edge $z \to y$ in
  $\mathcal G$ and each $\alpha \in L(z)$, we have an inclusion
  \[
    \rho(\alpha) \overline{N_\eps(W(y))} \subset W(z).
  \]
\item\label{item:parabolic_vertex_coset} If $z \in Z$ is a parabolic
  point (so that $z = \rho(g)p$ for some $p \in \Pi$ and $g\in \G$), 
  then $L(z)$ is a subset of the coset $g\Stab_\G(p)$.
\item\label{item:conical_vertex_singleton} If $z \in Z$ is
  \emph{not} a parabolic point, then the label set $L(z)$ is a
  singleton.
\item\label{item:parabolic_edge_compatible} For every edge $z \to y$
  in $\mathcal G$, if $z$ is a parabolic point, equal to $\rho(g)p$ for
  $p \in \Pi$, then $W(z)$ contains $z$, and $\overline{W(y)}$ does
  not contain $p$.
\item\label{item:automaton_quasidensity} There is a uniform constant
  $R > 0$ so that, for each element $\gamma \in \Gamma$, we can find
  a directed vertex path $z_1 \to \ldots \to z_{n+1}$ in
  $\mathcal G$ and elements $\alpha_i \in L(z_i)$ so that the
  product
  \[
    \alpha_1 \cdots \alpha_n
  \]
  lies within distance $R$ of $\gamma$. Here, $\G$ is equipped with
  the word metric induced by some fixed choice of finite generating
  set.
\end{enumerate}

\begin{remark}
  The reader may also wish to refer to \cite[Section 3]{MMW2} for a
  somewhat simpler version of the construction in \cite{Weisman-EGF}
  in a slightly different context.
\end{remark}

The relative automaton above contains all the information needed to
reconstruct the limit set of $\rho$. The use of automata to encode
limit sets of Kleinian groups has a long history, tracing back to
Sullivan's original ``symbolic coding'' argument for structural
stability of convex cocompact groups \cite{Sullivan1985}. The same
idea has also been used to compute visualizations of limit sets of
Kleinian groups; see e.g. \cite{McShaneParkerRedfern}. The
\emph{relative} automaton we consider here is very convenient to work
with when deforming $\rho$ in the space of EGF representations. We
will use it in the next section, where we deal with properties of this
deformation space.

\section{Deformations of EGF representations and peripheral stability}
\label{sec:peripheral}

The previous section dealt with the connection between the concepts of
extended geometrical finiteness and geometrical finiteness for a
single representation $\rho$. Now we want to understand families of
such representations, so we will review a key \emph{relative
  stability} property of EGF representations. Roughly, this property
says that if $\rho'$ is a small deformation of an EGF representation
$\rho:\G \to \Isom(X)$, and the restriction of $\rho'$ to peripheral
subgroups satisfies a certain condition, then $\rho'$ is also
EGF. This technical condition is called \emph{peripheral stability}.

As in the previous section, we will define a version of peripheral
stability which makes sense for a geometrically finite representation
$\rho:\G \to \Isom(X)$ when $X$ is a rank one symmetric space. This
will be simpler than the full definition when $\rho$ is an EGF
representation into a general semisimple Lie group; the definitions
are equivalent in the present context.

Note that, although we consider several notions of convergence for
representations throughout this text, we will always understand the
space $\Hom(\G, \Isom(X))$ to be equipped with the algebraic (or
compact-open) topology.

\subsection{Peripheral stability}

Let $\G$ be a finitely generated group and let $\rho:\G \to \Isom(X)$
be a faithful and geometrically finite representation. Let
$\mathcal{P} \subset \dX$ be the collection of all cusp points for
$\rho$, meaning that the collection $\mathcal{H}$ of $\rho$-parabolic
subgroups is precisely the set $\{\Stab_\G(p), p\in\mathcal P\}$.

\begin{definition}\label{def:peripheral_stability}
  A subset $O \subseteq \Hom(\G, \Isom(X))$ is \emph{peripherally
    stable} about $\rho$ if the following holds:

  Let $p \in \mathcal P$, let $U$ be a neighborhood of $p$ in $\dX$,
  let $K \subset \dX$ be a compact subset of $\dX \minus \{p\}$, and
  let $F$ be a finite subset of $\Stab_\G(p)$ such that
  \[
    \rho(\Stab_\G(p) \minus F)K \subset U.
  \]
  Then there is a relatively open subset $O'$ of $O$ (depending on
  $U$, $F$, and $K$) such that for all $\rho' \in O'$, we have
  \[
    \rho'(\Stab_\G(p) \minus F)K \subset U.
  \]
\end{definition}

In the current context, the relative stability property for EGF
representations can be stated as follows. For the result below, fix an
arbitrary metrization of
$\dX$.
\begin{theorem}[{See \cite[Theorem 1.4]{Weisman-EGF}}]
  \label{thm:relative_stability_theorem}
  Let $\rho:\G \to \Isom(X)$ be a geometrically finite representation,
  let $\mathcal{H}$ be the associated collection of $\rho$-parabolic
  subgroups, and let $\phi:\Lambda \to \bgamh$ be the associated
  boundary extension. Let $O \subseteq \Hom(\G, \Isom(X))$ be a
  peripherally stable subspace about $\rho$.

  Then, for any $\eps > 0$ and any compact subset $Z \subset \bgamh$,
  there is a relatively open subset $O' \subset O$ satisfying the
  following: if $\rho' \in O'$, then $\rho'$ is also an EGF
  representation with boundary extension $\phi':\Lambda' \to \bgamh$
  satisfying
  \[
    (\phi')^{-1}(Z) \textrm{ lies inside an }\epsilon\textrm{-neighborhood of } \phi^{-1}(Z).
  \]

  Moreover, the set $\Lambda'$ can be taken to be the limit set of
  $\rho'(\Gamma)$, so that $\phi'$ is the unique EGF boundary
  extension described by \Cref{prop:good_boundary_extension}.
\end{theorem}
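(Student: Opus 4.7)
The plan is to use the relative automaton of \Cref{sec:automaton_exists} as combinatorial scaffolding that survives small deformations in a peripherally stable subspace. I would fix a relative automaton for $\rho$ with buffer $2\eps$ in property \ref{item:automaton_edge_inclusion}, and produce $O'$ on which the weaker edge inclusion with buffer $\eps$ persists for every $\rho' \in O'$. The automaton has finitely many vertices, so the verification splits by vertex type. For a conical vertex $z$, the label set $L(z)$ is a singleton by \ref{item:conical_vertex_singleton}, so compact-open continuity of $\rho \mapsto \rho(\alpha)$ makes the inclusion $\rho'(\alpha)\overline{N_\eps(W(y))} \subset W(z)$ an open condition on $\rho'$. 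For a parabolic vertex $z = \rho(g)p$, the set $L(z)$ is an infinite subset of $g\Stab_\G(p)$ by \ref{item:parabolic_vertex_coset}; here \Cref{def:peripheral_stability} applied with $U = W(z)$, $K = \overline{N_\eps(W(y))}$ and some finite exception set $F$ yields the inclusion for all $\rho'(L(z) \setminus F)$, while continuity absorbs the finitely many elements of $F$.

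Once \ref{item:automaton_edge_inclusion} is known to persist, I would reconstruct the limit set and boundary extension for $\rho'$ combinatorially. Any infinite directed vertex path $z_1 \to z_2 \to \cdots$ with $\alpha_i \in L(z_i)$ yields a nested sequence of nonempty closed sets $\rho'(\alpha_1 \cdots \alpha_n)\overline{W(z_{n+1})}$ whose diameters shrink because of the $\eps$-buffer, hence collapse to a single point of $\dX$; the closure of all such points is the candidate $\Lambda'$. Property \ref{item:automaton_quasidensity} ensures that the words $\alpha_1 \cdots \alpha_n$ track quasigeodesics in $\G$, and together with \ref{item:parabolic_vertex_coset} and \ref{item:parabolic_edge_compatible} they converge in $\G \cup \bgamh$ to a well-defined point of $\bgamh$, yielding a candidate map $\phi':\Lambda' \to \bgamh$. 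The convergence condition of \Cref{defn:egf_definition} for $\rho'$ can then be checked by translating open sets along paths in $\mathcal G$ and invoking \ref{item:automaton_quasidensity} to replace an arbitrary sequence $\gamma_n \to z_\pm$ by an automaton word up to bounded error. Applying \Cref{prop:good_boundary_extension} afterwards identifies $\Lambda'$ with the limit set of $\rho'(\G)$ and makes $\phi'$ the canonical boundary extension.

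The main obstacle I expect is the quantitative clause that $(\phi')^{-1}(Z)$ lies in an $\eps$-neighborhood of $\phi^{-1}(Z)$ for compact $Z \subset \bgamh$. Conical fibers of $\phi'$ are singletons by \Cref{prop:good_boundary_extension} and move continuously, so the difficulty is concentrated at parabolic points, where the fiber is a nontrivial limit set $\Lambda(\rho'(\Stab_\G(p)))$ that a priori could spread out. I would handle this by first choosing the automaton fine enough that every vertex $z$ whose associated infinite paths have combinatorial limits in $Z$ has $W(z) \subset N_{\eps/2}(\phi^{-1}(Z))$; a finite-subgraph compactness argument, using that only finitely many vertices sit above $Z$, combined with \Cref{def:peripheral_stability} to control the long parabolic words entering cusps, propagates this control from $\rho$ to $\rho'$ and produces the claimed $\eps$-neighborhood bound after possibly shrinking $O'$.
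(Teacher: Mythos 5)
The first thing to note is that the paper does not prove \Cref{thm:relative_stability_theorem} at all: it is imported wholesale from \cite[Theorem 1.4]{Weisman-EGF}, and the only argument supplied in the text is the remark justifying the ``moreover'' clause by pointing to \cite[Remark 9.18]{Weisman-EGF}. So there is no internal proof to compare yours against; what you have written is a reconstruction of the strategy of the \emph{cited} proof. In outline it is on target: the persistence of the edge inclusions \ref{item:automaton_edge_inclusion} under peripherally stable deformation is exactly \Cref{lem:peripheral_edge_stability} of this paper (your split into conical vertices handled by continuity via \ref{item:conical_vertex_singleton}, and parabolic vertices handled by \Cref{def:peripheral_stability} plus absorbing a finite exceptional set, matches that lemma's proof almost verbatim), and reconstructing $\Lambda'$ and $\phi'$ from infinite automaton paths is indeed how \cite{Weisman-EGF} proceeds.

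Two steps in your sketch are genuine gaps rather than routine omissions. First, uniform $\eps$-nesting $\rho'(\alpha)\overline{N_\eps(W(y))} \subset W(z)$ does \emph{not} by itself force the diameters of the composed sets $\rho'(\alpha_1\cdots\alpha_n)\overline{W(z_{n+1})}$ to tend to zero: a nested sequence of open sets with uniform buffers can still have an intersection with nonempty interior. The contraction requires property \ref{item:automaton_sets_proper} (properness of the closures $\overline{W(z)}$) together with \cite[Proposition 7.11]{Weisman-EGF}; the paper is explicit about exactly this dependence in Case 1 of the proof of \Cref{prop:strong_relative_convergence}, and your argument needs the same input. Second, the quantitative clause that $(\phi')^{-1}(Z)$ lies in an $\eps$-neighborhood of $\phi^{-1}(Z)$ is the hardest part of the cited theorem, and ``choose the automaton fine enough'' does not address the real difficulty, which is concentrated at the parabolic fibers: one must show that $\Lambda(\rho'(\Stab_\G(p)))$ stays within $\eps$ of $\Lambda(\rho(\Stab_\G(p)))$ uniformly over the cusp orbits meeting $Z$, and this is precisely where peripheral stability (rather than mere algebraic closeness) is consumed --- compare the failure in \cite[Example 9.3]{Weisman-EGF} discussed in \Cref{sec:CAT(-1)}. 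Your sketch names this obstacle but does not supply the argument; since this clause is the content actually used to prove \Cref{thm:theorem_1} and \Cref{thm:theorem_2}, the proposal as written does not yet constitute a proof of the statement.
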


\begin{remark}
  The ``moreover'' part of \Cref{thm:relative_stability_theorem} is
  not stated explicitly as part of the cited result in
  \cite{Weisman-EGF}. However, this statement follows directly from
  the proof in that paper. Indeed, in \cite{Weisman-EGF}, the
  construction of the unique limit set described in
  \Cref{prop:good_boundary_extension} is carried out by applying the
  relative stability theorem to the (trivially) peripherally stable
  subspace $\{\rho\} \subset \Hom(\G, \Isom(X))$; see \cite[Remark
  9.18]{Weisman-EGF}.
\end{remark}

\subsection{Peripheral stability and relative automata}

The next lemma translates the peripheral stability condition into a
stability property for the relative automaton discussed previously in
\Cref{sec:automaton_exists}. This is another way to motivate the
definition of peripheral stability, since the automaton is a key tool
used both for the proof of \Cref{thm:relative_stability_theorem} in
\cite{Weisman-EGF} and for some results later in this paper.
\begin{lemma}
  \label{lem:peripheral_edge_stability}
  Let $O \subseteq \Hom(\G, \Isom(X))$ be a peripherally stable
  subspace about $\rho$, and let $\mathcal G$ be a relative
  automaton satisfying the properties listed above.

  There is an open neighborhood $O' \subseteq O$ of $\rho$ and
  a constant $\eps' > 0$ such that, for every $\rho' \in O'$, every
  directed edge $z \to y$ in $\mathcal{G}$, and every $\alpha \in
  L(z)$, we have
  \[
    \rho'(\alpha)\overline{N_{\eps'}(W(y))} \subset W(z).
  \]
\end{lemma}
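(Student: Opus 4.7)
The plan is to use the finiteness of the edge set of $\mathcal G$ and handle each edge $z \to y$ separately, intersecting the finitely many resulting open neighborhoods and taking the minimum of the values $\eps'$ at the end. For each fixed $\alpha \in \G$ the map $\rho' \mapsto \rho'(\alpha)$ is continuous into $\Homeo(\dX)$, so the inclusion $\rho(\alpha)\overline{N_\eps(W(y))} \subset W(z)$ from (A2) persists under small algebraic perturbations: since $W(z)$ is open and $\overline{N_\eps(W(y))}$ is compact, any $\rho'$ sufficiently close to $\rho$ satisfies $\rho'(\alpha)\overline{N_{\eps'}(W(y))} \subset W(z)$ for suitable $\eps' < \eps$. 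The argument then splits according to whether the source vertex $z$ is parabolic.

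If $z$ is non-parabolic, property (A4) tells us $L(z) = \{\alpha\}$ is a singleton, and the continuity remark above immediately supplies the neighborhood and $\eps'$. The substantive case is a parabolic vertex $z = \rho(g) p$ with $p \in \Pi$, where $L(z) \subset g \Stab_\G(p)$ may be infinite. My strategy is to transfer the closed inclusion $\rho(g\eta)\overline{N_\eps(W(y))} \subset W(z)$ into a form to which peripheral stability applies. Using (A5), I pick an open set $V \subset \dX$ with $z \in V$ and $\overline{V} \subset W(z)$, and some $\delta \in (0,\eps)$ with $p \notin K := \overline{N_\delta(W(y))}$; the latter is possible because $p \notin \overline{W(y)}$ by (A5). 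Setting $U_0 := \rho(g)^{-1} V$, an open neighborhood of $p$, splits the desired inclusion (for $\eta \in \Stab_\G(p)$) into $\rho'(\eta) K \subset U_0$ together with $\rho'(g) U_0 \subset W(z)$.

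For the first containment, the convergence-group dynamics of the discrete parabolic subgroup $\rho(\Stab_\G(p))$ acting on $\dX$ with convergence point $p$ guarantees that $\rho(\eta) K \subset U_0$ for all $\eta$ outside some finite subset $F_0 \subset \Stab_\G(p)$. Peripheral stability of $O$ about $\rho$, applied to the data $(p, U_0, K, F_0)$, then yields a neighborhood $O'_{z,y} \subseteq O$ on which this containment persists for every $\rho' \in O'_{z,y}$. For the second containment, $\overline{V} \subset W(z)$ and continuity of $\rho' \mapsto \rho'(g)$ ensure $\rho'(g) U_0 \subset W(z)$ after possibly shrinking $O'_{z,y}$. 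The finitely many exceptional label-set elements $g\eta \in L(z) \cap g F_0$ are handled individually by the continuity argument from the non-parabolic case. The main obstacle is precisely the potential infiniteness of $L(z)$ combined with the lack of $\Stab_\G(p)$-invariance of $W(z)$; introducing the intermediate set $V$ compactly contained in $W(z)$ is the device that decouples the parabolic dynamics at $p$ from the $\rho(g)$-translate and lets peripheral stability do its work.
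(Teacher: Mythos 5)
Your proof is correct and follows essentially the same route as the paper's: reduce to finitely many edges, use \ref{item:conical_vertex_singleton} and continuity for non-parabolic vertices, and for a parabolic vertex $z=\rho(g)p$ factor the labels through $\Stab_\G(p)$, invoke proper discontinuity on $\dX\minus\{p\}$ to get the cofinite inclusion, apply peripheral stability, and absorb the finitely many exceptional labels by continuity. Your intermediate set $V$ with $\overline V\subset W(z)$ is a slightly more careful treatment of the step where the paper applies peripheral stability with the $\rho'$-dependent target $\rho'(g_z^{-1})W(z)$, but the argument is the same in substance.
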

\begin{proof}
  Property \ref{item:automaton_edge_inclusion} says that the desired
  inclusions are all satisfied when $\rho' = \rho$. So we just
  need to check that the desired condition is relatively open in $O$
  for each edge in $\mathcal G$, since there are finitely many
  such. First, if $z$ is not a parabolic point in $\Lambda_\rho$,
  then $L(z)$ is a singleton by \ref{item:conical_vertex_singleton}
  and so the condition is already open in $\Hom(\G, \Isom(X))$.

  On the other hand, if $z$ is a parabolic point, then by property
  \ref{item:parabolic_vertex_coset}, each $\alpha \in L(z)$ can be
  written $\alpha = g_z\alpha'$, with $\alpha' \in \Stab_\G(p)$ for
  $p \in \Pi$ and $g_z \in \G$ depending only on $z$. (Here
  $\Stab_\G(p)$ is the stabilizer with respect to the $\rho$
  action.)

  By property \ref{item:parabolic_edge_compatible}, the set $W(z)$
  contains $z$, which means that $\rho(g_z^{-1})W(z)$ contains $p$. By
  property \ref{item:parabolic_edge_compatible}, we know that
  $\overline{W(y)}$ does not contain $p$; this means that there is
  also some $\eps' > 0$ so that $\overline{N_{\eps'}(W(y))}$ does not
  contain $p$. Since there are only finitely many edges $z \to y$,
  this $\eps'$ can be chosen independently of $z$; we can also assume
  that $\eps'$ is smaller than the constant $\eps$ from condition
  \ref{item:automaton_edge_inclusion}.

  As $\Stab_\G(p)$ acts properly discontinuously on
  $\dX \minus \{p\}$, for all but finitely many
  $\alpha' \in \Stab_\G(p)$ we have
  $\rho(\alpha')\overline{N_{\eps'}(W(y))} \subset
  \rho(g_z^{-1})W(z)$. The peripheral stability assumption then
  implies that there is an open neighborhood $O'$ of $\rho$ in $O$, so
  that for all but finitely many $\alpha' \in \Stab_\G(p)$, every
  $\rho' \in O'$ satisfies
  \[
    \rho'(\alpha')\overline{N_{\eps'}(W(y))} \subset
    \rho'(g_z^{-1})W(z).
  \]
  This means that for all but finitely many exceptional
  $\alpha \in L(z)$, every $\rho' \in O'$ satisfies
  \begin{equation}
    \label{eq:perturbed_edge_inclusion}
    \rho'(\alpha)\overline{N_{\eps'}(W(y))} \subset W(z).
  \end{equation}
  However we also know that
  $\rho(\alpha)\overline{N_{\eps'}(W(y))} \subset W(z)$ for every
  $\alpha \in L(z)$, so by further shrinking $O'$ we can also ensure
  that for every $\rho' \in O'$, \eqref{eq:perturbed_edge_inclusion}
  holds for the finitely many exceptional $\alpha$ as well.
\end{proof}

\section{Strong convergence and
  peripheral stability}\label{sec:strong-peripheral}

As noted in the introduction, in the classical (Kleinian) context,
algebraic convergence of a sequence of representations
$\rho_n:\G \to \Isom(X)$ does not guarantee that the sequence of limit
sets $\Lambda_{\rho_n}$ converges to the limit set of the limiting
representation. Typically one must also assume that the sequence
$(\rho_n)$ also converges \emph{strongly}, meaning that the sequence
of subgroups $\rho_n(\G)$ converges in the Chabauty topology on the
space of closed subgroups of $\Isom(X)$.

In this section, after briefly reviewing the notion of strong
convergence for geometrically finite representations, we prove that it
is consistent with the notion of peripheral stability for EGF
representations.

\subsection{Chabauty topology and strong convergence}

We refer to \cite{BHK} (see also \cite{delaharpe2008spaces} and
\cite[Section E.1]{BenedettiPetronio}) for a general reference on the
Chabauty topology.

Let $\mathcal{CL}:=\mathcal{CL}(\Isom(X))$ be the set of closed
subgroups of $\Isom(X)$. The Chabauty topology on $\mathcal{CL}$ is
generated by the basis of open subsets, for $C\in \mathcal{CL}$ a
closed subgroup, $K\subset \Isom(X)$ a compact subset and $U$ an open
neighborhood of the identity in $\Isom(X)$:
\[V_{K,U,C}:=\{D\in \mathcal{CL} \; | \; D\cap K \subset CU\textrm{ and }
  C\cap K \subset DU\}.\] Equipped with this topology, $\mathcal{CL}$ is
a compact space.

An important fact is that the subset of discrete subgroups of
$\Isom(X)$ is open in $\mathcal{CL}$ \cite[Prop 3.4]{BHK}. Following
the proof of this fact in \cite{BHK} actually yields a slightly
stronger statement, given below:
\begin{proposition}
  \label{prop:uniformly_discrete_open}
  Let $d$ be any metric inducing the compact-open topology on
  $\Isom(X)$. For any $R > 0$, the set of closed subgroups
  $G < \Isom(X)$ satisfying $\min_{g \in G \minus \{e\}} d(g, e) > R$
  is open in $\mathcal{CL}$.
\end{proposition}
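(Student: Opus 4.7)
The plan is to argue by contradiction, extracting from a failure of openness a nontrivial one-parameter subgroup inside $G_0$ and thereby contradicting discreteness, which the hypothesis $\min_{g \in G_0 \setminus \{e\}} d(g,e) > R > 0$ forces. Fix $G_0$ with $R_0 := \min_{g \in G_0 \setminus \{e\}} d(g,e) > R$. If the set in the statement is not open at $G_0$, then, since the Chabauty topology on $\mathcal{CL}(\Isom(X))$ is metrizable (being second countable and compact), I can extract a sequence $G_n \to G_0$ in $\mathcal{CL}$ together with elements $g_n \in G_n \setminus \{e\}$ satisfying $d(g_n,e) \leq R$. Throughout I rely on the sequential characterization of Chabauty convergence: (a) every element of $G_0$ is the limit of a sequence $h_n \in G_n$, and (b) every subsequential limit of a sequence $h_n \in G_n$ lies in $G_0$.

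The first step is to pass to a convergent subsequence of $(g_n)$ in $\Isom(X)$. This requires that the closed $d$-ball $\{h \in \Isom(X) : d(h,e) \leq R\}$ be precompact, which is automatic for any proper metric inducing the compact-open topology (for instance, one coming from a left-invariant Riemannian structure on the connected Lie group $\Isom(X)$). Since the conclusion is about openness in $\mathcal{CL}$, which is a purely topological feature, I may freely assume $d$ has this property. Along a subsequence, $g_n \to g_\infty$ in $\Isom(X)$ with $d(g_\infty,e) \leq R$. By property (b), $g_\infty \in G_0$, and combined with $R < R_0$ this forces $g_\infty = e$. So $g_n \to e$ in $\Isom(X)$, while $g_n \neq e$ for all $n$.

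Now I manufacture a one-parameter subgroup in $G_0$ by the classical normalize-and-extract trick. For $n$ large, $g_n = \exp(v_n)$ with $v_n \in \mathfrak{isom}(X)$ nonzero and $v_n \to 0$. Set $w_n := v_n/\|v_n\|$; by compactness of the unit sphere, pass to a further subsequence with $w_n \to w$, $\|w\| = 1$. For each $t \in \R$, let $k_n := \lfloor t/\|v_n\| \rfloor \in \Z$; then $k_n v_n = (\lfloor t/\|v_n\| \rfloor \cdot \|v_n\|)\, w_n \to t w$ in the Lie algebra, so $g_n^{k_n} = \exp(k_n v_n) \to \exp(tw)$ in $\Isom(X)$. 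Since each $g_n^{k_n}$ lies in $G_n$, property (b) forces $\exp(tw) \in G_0$ for every $t \in \R$. Hence $G_0$ contains the nontrivial one-parameter subgroup $\{\exp(tw)\}_{t \in \R}$, which passes arbitrarily close to $e$, contradicting $R_0 > 0$. The main subtlety in the argument is the compactness step: extracting a subsequential limit of $(g_n)$ for a general metric that need not be proper is handled by the observation that openness in $\mathcal{CL}$ is purely topological, so one is free to switch to a proper metric inducing the same topology.
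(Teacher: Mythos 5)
Your core argument is the right one, and it is in substance the same as the paper's: the paper gives no independent proof of this proposition, deferring instead to the proof of \cite[Proposition 3.4]{BHK} that discreteness is Chabauty-open, and that proof is exactly your normalize-and-extract construction of a one-parameter subgroup. The steps $g_n \to e$, $g_n = \exp(v_n)$ with $v_n \to 0$, $g_n^{\lfloor t/\|v_n\|\rfloor} \to \exp(tw) \in G_0$ via the Chabauty condition (b), and the resulting contradiction with $\min_{g \in G_0 \setminus \{e\}} d(g,e) > 0$ are all correct.

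The gap is in your treatment of compactness. You need the closed ball $B_R = \{h : d(h,e) \le R\}$ to be compact in order to extract the convergent subsequence $g_n \to g_\infty$, and your justification --- that openness in $\mathcal{CL}$ is purely topological, so you may replace $d$ by a proper metric --- does not work: the set whose openness is asserted is itself defined in terms of $d$, so it changes when $d$ does. The issue is not merely formal. If $d$ is bounded by $1$ (such metrics do induce the compact-open topology) and $R = 2$, then under the natural convention that the trivial group satisfies the condition vacuously, the set in question is exactly $\{\{e\}\}$; and $\{\{e\}\}$ is not open in $\mathcal{CL}$, since the cyclic groups generated by loxodromic isometries of translation length $n$ converge to the trivial group in the Chabauty topology. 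So the proposition, read literally for an arbitrary metric inducing the topology, can fail, and your proof is complete precisely under the additional hypothesis that $B_R$ is compact (e.g.\ that $d$ is proper). This is arguably an imprecision inherited from the statement rather than a defect of your argument --- in the paper's sole application of the proposition, $R$ is taken smaller than the minimal displacement of a fixed nontrivial discrete group with respect to a fixed proper metric, so compactness of $B_R$ holds --- but the reduction should be stated as an explicit hypothesis rather than claimed to be free.
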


We also have the following useful criterion for convergence in
$\mathcal{CL}$:
\begin{proposition}[{\cite[Proposition E.1.2]{BenedettiPetronio}}]
  A sequence $(C_n)_{n\in \N}$ of elements of $\mathcal{CL}$ converges
  to $C\in \mathcal{CL}$ if and only if both conditions below hold:
  \begin{enumerate}[label=(C\arabic*)]
    \item\label{item:Chabauty_C1} Any accumulation point of a sequence $(c_n)_{n\in \N}$, where each $c_n$ belongs to $C_n$, belongs to $C$.
    \item\label{item:Chabauty_C2} Each point of $C$ is the limit of a sequence $(c_n)_{n\in \N}$, where each $c_n$ belongs to $C_n$.
  \end{enumerate}
\end{proposition}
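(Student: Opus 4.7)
The plan is to work directly from the basis $V_{K,U,C}$ defining the Chabauty topology, translating convergence $C_n \to C$ into the statement that for every compact $K \subset \Isom(X)$ and every open neighborhood $U$ of the identity, one has $C_n \cap K \subset CU$ and $C \cap K \subset C_n U$ for all sufficiently large $n$. Both implications then reduce to a careful bookkeeping of these two containments. Throughout, I would fix a left-invariant metric $d$ inducing the compact-open topology on $\Isom(X)$ and use that $\Isom(X)$ is locally compact, second countable, and metrizable, so there is a countable shrinking basis of neighborhoods of the identity.

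For the forward implication, assume $C_n \to C$. To verify \ref{item:Chabauty_C1}, take $c_n \in C_n$ with a subsequential limit $c$; if $c \notin C$, then since $C$ is closed I can choose a compact neighborhood $K$ of $c$ disjoint from $C$ and an open neighborhood $U$ of the identity small enough that a smaller neighborhood of $c$ is disjoint from $CU$. Chabauty convergence forces $c_n \in C_n \cap K \subset CU$ for large $n$, contradicting $c_n \to c$. For \ref{item:Chabauty_C2}, fix $c \in C$, a countable shrinking basis $(U_k)$ of symmetric open neighborhoods of the identity, and a compact neighborhood $K$ of $c$. Convergence gives, for each $k$, an $N_k$ with $c \in C \cap K \subset C_n U_k$ for all $n \geq N_k$, so I can pick $c_n^{(k)} \in C_n$ with $d(c_n^{(k)}, c)$ arbitrarily small. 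A standard diagonal extraction then yields a single sequence $c_n \in C_n$ with $c_n \to c$.

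For the reverse implication, assume \ref{item:Chabauty_C1} and \ref{item:Chabauty_C2}, and suppose for contradiction that $C_n \not\to C$. Then some basic open $V_{K,U,C}$ containing $C$ fails to contain $C_n$ for infinitely many $n$; after passing to a subsequence, either there exist $c_n \in C_n \cap K$ with $c_n \notin CU$, or there exists a fixed $c \in C \cap K$ with $c \notin C_n U$ for every $n$. In the first case, by compactness of $K$ some subsequence of $c_n$ converges to a point $c \in K$; by \ref{item:Chabauty_C1}, $c \in C$, and since $CU$ is open and contains $c$, we get $c_n \in CU$ for large $n$, a contradiction. In the second case, replace $U$ by a symmetric sub-neighborhood; condition \ref{item:Chabauty_C2} produces $c_n \in C_n$ with $c_n \to c$, so $c_n^{-1}c \in U$ eventually, hence $c \in c_n U \subset C_n U$, contradicting the choice of $c$.

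The main point requiring care is not conceptual but notational: because $\Isom(X)$ is non-abelian, one must consistently distinguish between $CU$ and $UC$, and between $U$ and $U^{-1}$, and shrink to symmetric sub-neighborhoods of the identity whenever one wants to pass freely between a metric statement $d(c_n, c) < \eps$ and a Chabauty statement $c \in C_n U$. Once this is handled, every remaining step is routine point-set manipulation using only closedness of $C$, openness of $CU$, compactness of $K$, and the existence of a countable neighborhood basis at the identity.
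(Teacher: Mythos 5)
The paper gives no proof of this proposition at all—it is quoted directly from Benedetti--Petronio—so your self-contained argument from the basic open sets $V_{K,U,C}$ is doing work the paper delegates to a reference. Your overall strategy is the standard one and is essentially sound: in the forward direction, left-invariance of the metric correctly lets you compare $CU$ with metric neighborhoods of $C$ for (C1), and the diagonal extraction for (C2) is fine; Case~1 of the reverse direction (compactness of $K$ plus openness of $CU$ and condition (C1)) is also correct. One point you pass over silently: the equivalence of ``$C_n\to C$'' with ``$C_n\in V_{K,U,C}$ eventually, for every $K$ and $U$'' requires knowing that the sets $V_{K,U,C}$ \emph{with this fixed} $C$ form a neighborhood basis at $C$, since a basic open set containing $C$ is a priori some $V_{K,U,C'}$ with $C'\ne C$. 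This is standard but deserves a sentence.

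The one step that does not follow as written is in Case~2 of the reverse implication. The failure of $C\cap K\subset C_nU$ for infinitely many $n$ gives, for each such $n$, a witness $c^{(n)}\in C\cap K$ with $c^{(n)}\notin C_nU$, and these witnesses depend on $n$; passing to a subsequence does not entitle you to a single fixed $c$ working for all $n$ (the sets $\{c\in C\cap K : c\notin C_nU\}$ are nonempty and compact, but their intersection over infinitely many $n$ may be empty). The repair is the same device you already use in Case~1: by compactness of $C\cap K$, extract a subsequence with $c^{(n)}\to c\in C\cap K$; apply (C2) to get $d_n\in C_n$ with $d_n\to c$; then $d_n^{-1}c^{(n)}\to e$ by joint continuity of multiplication and inversion, so $c^{(n)}\in d_nU\subset C_nU$ for all large $n$, contradicting the choice of witnesses. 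With that one-line patch the proof is complete.
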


The notion of strong convergence combines algebraic convergence and
convergence in the Chabauty topology. We note that convergence in the
Chabauty topology is also classically called \emph{geometric
  convergence}.
\begin{definition}\label{def:strong_convergence}
  Let $(\rho_n)_{n\in\N}$ be a sequence in $\Hom(\G,\Isom(X))$.
  \begin{itemize}
  \item We say that the sequence $(\rho_n)$ converges \emph{strongly}
    to $\rho$ if it converges algebraically to $\rho$ and if the
    subgroups $\rho_n(\G)$ converge to $\rho(\G)$ in the Chabauty
    topology on the space of subgroups of $\Isom(X)$.
    
  \item If $\rho:\G \to \Isom(X)$ is geometrically finite, and
    $\mathcal H$ is the set of $\rho$-parabolic subgroups, we say the
    sequence $(\rho_n)$ converges \emph{relatively strongly} to $\rho$
    if it converges algebraically to $\rho$ and if, for any
    $H\in \mathcal H$, the sequence $\rho_n(H)$ converges to $\rho(H)$
    in the Chabauty topology.
  \end{itemize}
\end{definition}

\subsection{Equivalence between strong convergence and peripheral stability}

The proposition below links the notions of strong convergence for
geometrically finite representations and peripheral stability for EGF
representations:
\begin{proposition}
  \label{prop:strong_relative_convergence}
  Let $\rho\in \Hom(\G,\Isom(X))$ be a geometrically finite
  representation, and let $\mathcal H$ be the family of $\rho$-parabolic
  subgroups. Suppose that the sequence $(\rho_n)_{n\in \N}$ of
  representations in $\Hom(\G,\Isom(X))$ converges algebraically to
  $\rho$, and that the restriction of $\rho_n$ to each cusp group
  $H\in \mathcal H$ is faithful. Then the following are equivalent:
  \begin{enumerate}[label=(\arabic*)]
  \item\label{item:strong_convergence} The sequence $(\rho_n)$ converges strongly to $\rho$;
  \item\label{item:relative_strong_convergence} The sequence $(\rho_n)$ converges relatively strongly to $\rho$;
  \item\label{item:peripheral_stability} The family $\{\rho_n, n\in \N\}$ is a peripherally stable deformation of $\rho$.
  \end{enumerate}
\end{proposition}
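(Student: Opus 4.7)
The plan is to prove the equivalence via the cycle $(1) \Rightarrow (2) \Rightarrow (3) \Rightarrow (1)$. A useful preliminary, to be used throughout, is that under any of the three hypotheses the subgroup $\rho_n(H)$ is, for $n$ large, a discrete elementary virtually nilpotent subgroup of $\Isom(X)$: virtual nilpotence comes from $H$ and the faithfulness of $\rho_n|_H$, and uniform discreteness from \Cref{prop:uniformly_discrete_open} applied to whichever Chabauty convergence is at hand. Since $\rho_n(H)$ is elementary, its limit set $\Lambda(\rho_n(H))$ consists of one or two common fixed points, which by algebraic convergence tend to the unique fixed point $p$ of $\rho(H)$; hence $\Lambda(\rho_n(H)) \to \{p\}$ in Hausdorff distance.

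For $(1) \Rightarrow (2)$, condition (C2) is immediate from algebraic convergence. For (C1), suppose $g_k = \rho_{n_k}(h_k) \to g$ with $h_k \in H$. Pick $q_k \in \Lambda(\rho_{n_k}(H))$; by the preliminary $q_k \to p$, and by setwise invariance $g_k(q_k) \in \Lambda(\rho_{n_k}(H))$, so $g_k(q_k) \to p$ as well. Continuity of the $\Isom(X)$-action on $\dX$ gives $g_k(q_k) \to g(p)$, hence $g(p) = p$. Combined with $g \in \rho(\G)$ from the Chabauty hypothesis on the full group, this yields $g \in \Stab_{\rho(\G)}(p) = \rho(H)$.

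For $(2) \Rightarrow (3)$ I argue by contradiction. If peripheral stability fails at some $p, U, K, F$, I extract $n_k \to \infty$, $h_k \in H \minus F$ and $x_k \in K$ with $\rho_{n_k}(h_k)x_k \notin U$, $x_k \to x \in K$, $\rho_{n_k}(h_k) x_k \to y \in \dX \minus U$. If $\{h_k\}$ is bounded, taking $h_k = h$ and using algebraic convergence gives $y = \rho(h)x \in \rho(h)K \subset U$, a contradiction. If $h_k$ are distinct and $\rho_{n_k}(h_k) \to g \in \Isom(X)$, apply (C1) of $(2)$ to write $g = \rho(h)$ for some $h \in H$; then the non-identity elements $\rho_{n_k}(h_k h^{-1})$ accumulate at the identity in $\rho_{n_k}(H)$, contradicting uniform discreteness. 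The main technical obstacle is the remaining case, where $h_k$ are distinct and $\rho_{n_k}(h_k) \to \infty$ in $\Isom(X)$. After extraction, $\rho_{n_k}(h_k)$ acts on $\dX$ as a convergence sequence with attractor $b_+$ and repeller $b_-$, and since $x \in K$ avoids $p$ one has $y = b_+$ once $b_- = p$ is established. To pin down $b_+ = p$ I pass to $X$: the orbit $\rho_{n_k}(H) x_0$ of a basepoint lies in a horoball centered at the fixed point $p_{n_k}$ of $\rho_{n_k}(H)$ (or, in the loxodromic case, in an axial neighborhood connecting $p_{n_k}^\pm$), and since $p_{n_k} \to p$ and $\rho_{n_k}(h_k)x_0 \to \infty$ along this horoball or axis, it accumulates at $p$ in $\overline{X}$; thus $b_+ = p$ and $y = p \in U$, a contradiction.

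For $(3) \Rightarrow (1)$, \Cref{thm:relative_stability_theorem} combined with \Cref{prop:egf_nilpotent_gf} shows that $\rho_n$ is geometrically finite for $n$ large, with $\Lambda_{\rho_n} \to \Lambda_\rho$ in Hausdorff distance. Condition (C2) for $\rho_n(\G) \to \rho(\G)$ is immediate, so it remains to check (C1). Suppose $\rho_n(\gamma_n) \to g \in \Isom(X)$. By property \ref{item:automaton_quasidensity} of the relative automaton, $\gamma_n$ factors along a directed path $z_1 \to \cdots \to z_{m_n + 1}$ as $\alpha_1^{(n)} \cdots \alpha_{m_n}^{(n)} \cdot r_n$ with $r_n$ in a bounded ball of $\G$; by \Cref{lem:peripheral_edge_stability}, the $\eps'$-nested edge inclusions hold simultaneously for $\rho_n$ when $n$ is large. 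If $m_n \to \infty$, the finiteness of the automaton forces a repeated vertex and hence a closed sub-loop which $\rho_n$ strictly contracts on some $W(z)$; iterating such loops, $\rho_n(\gamma_n)$ acts on $\dX$ as a convergence sequence and so cannot tend to any isometry, contradicting $\rho_n(\gamma_n) \to g$. Hence $m_n$ stays bounded; a parallel analysis at the parabolic vertices (using $\Lambda(\rho_n(H_i)) \to \{p_i\}$ to rule out uncompensated escape to infinity of the parabolic labels, along the lines of sub-case B2 above) further forces each $\alpha_i^{(n)}$ into a finite set, so $\gamma_n$ lies in a finite subset of $\G$ along a subsequence and $g = \rho(\gamma) \in \rho(\G)$.
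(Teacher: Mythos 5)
Your cycle $(1)\Rightarrow(2)\Rightarrow(3)\Rightarrow(1)$ matches the paper's architecture, and your $(1)\Rightarrow(2)$ is essentially the paper's argument. The problem is in $(2)\Rightarrow(3)$, in exactly the case that carries all the difficulty. When $h_k$ are distinct and $\rho_{n_k}(h_k)$ diverges in $\Isom(X)$, you reduce everything to the claim that $\rho_{n_k}(h_k)x_0\to p$ in $\overline X$, and you justify it by saying the orbit lies in a horoball centered at $p_{n_k}$ ``or, in the loxodromic case, in an axial neighborhood connecting $p_{n_k}^\pm$'' and escapes to infinity there. The horoball version is fine (a geodesic ray trapped in a horoball converges to its center, and the horoballs converge). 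But the loxodromic case --- which is unavoidable here, since a cusp group $H\simeq\Z$ can perfectly well have $\rho_n(H)$ generated by a short loxodromic converging strongly to a parabolic, the classical ``opening a cusp'' picture --- is where the argument breaks. The orbit $\rho_{n_k}(H)x_0$ lies in the $D_k$-neighborhood of the axis $A_k$ with $D_k=d(x_0,A_k)\to\infty$, and even though $A_k\to\{p\}$ in $\overline X$, the closure at infinity of an $R_k$-neighborhood of $A_k$ with $R_k\to\infty$ is \emph{not} concentrated near $p$ (in $\H^2$, the $D_k$-neighborhood of a semicircle of radius $\delta_k$ about $0$ with $D_k\approx d(i,A_k)$ has closure meeting $\partial\H^2$ in an interval of definite size). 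So ``it goes to infinity inside the axial neighborhood, hence accumulates at $p$'' is not a proof; the conclusion is true but needs a quantitative argument relating displacement along the axis, distance to the axis, and visual position. This is precisely what the paper's \Cref{lem:CAT(-1).2} supplies, by projecting onto a fixed horosphere at $p$ and using that projections to convex sets in a CAT($-1$) space are $1$-Lipschitz; that lemma shows directly that the $\rho_{n_k}(h_k)$ stay in a compact subset of $\Isom(X)$, which kills your divergent case outright and avoids the north--south analysis. (A smaller omission in the same step: you need $b_-=p$, not just $b_+=p$, to conclude $y=b_+$ from $x\ne p$; you flag this but never establish it.)

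Your $(3)\Rightarrow(1)$ follows the paper's two-case automaton strategy, but both cases are sketched at a level that hides the real work: the ``repeated loop forces uniform contraction'' claim is the content of the uniform nesting estimate (\Cref{lem:peripheral_edge_stability} plus \cite[Prop.\ 7.11]{Weisman-EGF}), and the bounded case cannot simply ``force each $\alpha_i^{(n)}$ into a finite set'' by citing your sub-case B2 --- the paper proves instead, by induction on the number of parabolic blocks in the alternating normal form $g_0h_1^{(n)}g_1\cdots h_k^{(n)}g_k$ and using peripheral stability at each block (together with $\rho(g_{i})p_i\ne p_{i-1}$ to prevent cancellation), that $\rho_n(\gamma_n)K$ collapses to a singleton whenever some $h_i^{(n)}$ is unbounded, whence $\rho_n(\gamma_n)$ diverges. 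That induction is the missing content behind your phrase ``rule out uncompensated escape to infinity.''
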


The first implication \ref{item:strong_convergence} $\implies$
\ref{item:relative_strong_convergence} is not difficult and we prove
it below. The second implication
\ref{item:relative_strong_convergence} $\implies$
\ref{item:peripheral_stability} relies on Gromov hyperbolicity of the
space $X$. We will deal with it in the following \Cref{sec:CAT(-1)},
using the language of CAT($-1$) geometry. The last implication
\ref{item:peripheral_stability} $\implies$
\ref{item:strong_convergence} is more involved, since it relies on the
relative automaton discussed in \Cref{sec:automaton_exists}. We will
tackle it in \Cref{sec:peripheral_to_strong}.

\begin{proof}[Proof of \ref{item:strong_convergence}
  $\implies$ \ref{item:relative_strong_convergence}]
  We assume that $(\rho_n)$ converges strongly to $\rho$. Denote by
  $G_n$, resp. $G$, the images $\rho_n(\G)$, resp. $\rho(\G)$. Fix
  $H\in \mathcal H$, and let $H_n:=\rho_n(H)$ and
  $H_\infty:=\rho(H)$. Recall that $H$ is virtually nilpotent and
  infinite. Since $\rho_n$ is faithful on $H$, this implies that each
  $H_n$ preserves a subset of $\dX$ containing either one or two
  points.  Let $\{p_n^1, p_n^2\}$ be this subset (allowing for the
  possibility that $p_n^1 = p_n^2$).

  As $\rho_n$ converges algebraically to $\rho$, for each $h\in H$, we
  have that $\rho_n(h)\in H_n$ converges to $\rho(h)$ in
  $H_\infty$. So condition \ref{item:Chabauty_C2} for the Chabauty
  convergence of $H_n$ to $H_\infty$ is fulfilled. It also follows
  that both $p_n^1$ and $p_n^2$ converge to $p$. If
  $p_n^1 = p_n^2 = p_n$, this holds because $\rho_n(h)$ fixes $p_n$,
  and for any infinite-order $h \in H$ we know that $\rho(h)$ uniquely
  fixes $p$. Otherwise, $H$ is virtually infinite cyclic. So for every
  $h$ in a finite-index subgroup of $H$, we have
  $\rho_n(h)p_n^i = p_n^i$, and again when $h$ has infinite order the
  fixed points of $\rho_n(h)$ must converge to the unique fixed point
  of $\rho(h)$.

  We want to prove \ref{item:Chabauty_C1} for the convergence of $H_n$
  to $H_\infty$. So suppose that an element $g\in \Isom(X)$ is an
  accumulation point of some sequence $h_n$ in $H_n$. First, the
  Chabauty convergence of $G_n$ to $G$ ensures that $g\in
  G$. Moreover, as $h_n\cdot \{p_n^1, p_n^2\} = \{p_n^1, p_n^2\}$ for
  all $n$, we can pass to the limit to see that $g\cdot p=p$. But
  $H_\infty=\Stab_G(p)$, so that $g\in H_\infty$.
\end{proof}

\subsubsection{Relative strong convergence implies peripheral
  stability}\label{sec:CAT(-1)}

To prove the implication \ref{item:relative_strong_convergence}
$\implies$ \ref{item:peripheral_stability} in
\Cref{prop:strong_relative_convergence}, we need to use the fact that
the space $X$ is CAT($-1$), up to a rescaling of the metric. This
assumption is actually necessary, since the analogous implication does
\emph{not} hold if we instead only assume that $\rho$ is an EGF
representation into some higher-rank semisimple Lie group. Indeed,
\cite[Example 9.3]{Weisman-EGF} exhibits a continuous family
$\{\rho_t : 0 \le t \le \eps\}$ of representations of the free group
$\mathbf F_2 \simeq \Z * \Z$ into $\SL(4, \R)$ converging to an EGF
representation $\rho_0$, so that the family $\rho_t$ is relatively
strongly convergent but \emph{not} peripherally stable. In this
example, the limiting representation $\rho_0$ even has an injective
boundary extension, meaning it is a \emph{relative Anosov}
representation (another related generalization of geometrical
finiteness in higher rank).

In the cited example, the peripheral subgroups of $\mathbf F_2$ are
the conjugates of the cyclic free factors, and the restriction of
$\rho_t$ to each of these factors converges strongly to a unipotent
representation of $\Z$. The problem in the example occurs because the
limit set of $\rho_t(\Z)$ does not vary continuously at $t = 0$.

This problem no longer occurs in rank one, which follows from our next
lemma.  Recall that any virtually nilpotent subgroup $G$ of $\Isom(X)$
is either elliptic, parabolic or hyperbolic. For such a subgroup, we
denote by $\mathrm{Fix}(G)$ the set of fixed points in $\overline X$
of $G$ and by $\mathcal C(G)$ the convex hull in $\overline X$ of
$\mathrm{Fix}(G)\cup\Lambda_G$. 
\begin{lemma}\label{lem:CAT(-1).3}
  Let $G$ be a virtually nilpotent group and let $(\nu_n)$ be a
  sequence of representations $\nu_n : G \to \Isom(X)$, converging
  algebraically to a representation $\nu: G \to \Isom(X)$ whose image
  is a nontrivial parabolic subgroup.
  
  Then the convex hulls $\mathcal C(\nu_n(G))$ converge to
  $\Lambda_\nu$ in the Hausdorff topology on $\overline X$.
\end{lemma}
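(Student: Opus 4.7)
The plan is to split the argument into two steps: first, show that the ``skeleton'' $A_n := \mathrm{Fix}(\nu_n(G)) \cup \Lambda_{\nu_n(G)}$ itself converges to $\{p\}$ in the Hausdorff topology on $\overline X$; and second, upgrade this to Hausdorff convergence of the convex hulls $\mathcal{C}(\nu_n(G)) = \mathrm{Hull}(A_n)$ themselves, crucially using the CAT$(-1)$ geometry of $X$.

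For the first step, I would fix an element $g \in G$ such that $\nu(g)$ is a nontrivial parabolic isometry of $X$, which exists because $\nu(G)$ is a nontrivial parabolic group. By algebraic convergence, $\nu_n(g^2) \to \nu(g)^2$, and the latter is still a parabolic isometry fixing only $p$. Each image $\nu_n(G)$ is virtually nilpotent, hence elementary in $\Isom(X)$; in particular $A_n$ is nonempty and contains at most two points on $\partial X$. Since the boundary part of $A_n$ is a $\nu_n(G)$-invariant set of size at most $2$, squares in $\nu_n(G)$ (in particular $\nu_n(g^2)$) must pointwise fix it; together with the obvious $\mathrm{Fix}(\nu_n(G)) \subset \mathrm{Fix}(\nu_n(g^2))$, this yields $A_n \subset \mathrm{Fix}(\nu_n(g^2))$. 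Joint continuity of the $\Isom(X)$-action on $\overline X$ then forces any accumulation point $y$ of a sequence $x_n \in A_n$ to satisfy $\nu(g)^2 y = y$, so $y = p$; combined with the nonemptiness of $A_n$, this gives $A_n \to \{p\}$ in Hausdorff distance.

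For the second step, I would use the CAT$(-1)$ geometry of $X$ via Gromov products and visual metrics on $\overline X$. The key fact is that in a CAT$(-1)$ space, the compactification metric on $\overline X$ is quasi-comparable to a visual metric, and the thin-triangles property implies that whenever $x, y \in \overline X$ have large Gromov product with $p$ based at a fixed origin $o \in X$, so does every point on the geodesic $[x,y] \subset \overline X$, up to an additive constant depending only on the hyperbolicity constant. Since $A_n \to \{p\}$ Hausdorff, the Gromov products $\langle x, p \rangle_o$ over $x \in A_n$ diverge uniformly, so by thin triangles the same holds over the full convex hull $\mathrm{Hull}(A_n)$; translating back to the compactification metric yields $\mathcal{C}(\nu_n(G)) \to \{p\}$ Hausdorff.

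The main obstacle is the second step. The delicate case arises when $\nu_n(G)$ is loxodromic with two boundary fixed points distinct from but converging to $p$: then $\mathrm{Hull}(A_n)$ contains a bi-infinite axis that \emph{a priori} could dip deep into $X$ away from the horoball structure at $p$, and one must show it stays close to $p$ in the compactification. The CAT$(-1)$ hypothesis is essential here (not merely Gromov hyperbolicity of the Cayley graph), so the analogous statement can and does fail in higher rank, consistent with the counterexample cited in the text preceding the lemma.
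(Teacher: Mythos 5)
Your proof is correct and follows essentially the same route as the paper's: pick $g\in G$ with $\nu(g)$ parabolic fixing only $p$, use algebraic convergence and the inclusion of the fixed-point/limit-set data of $\nu_n(G)$ into $\mathrm{Fix}(\nu_n(g^2))$ to show the ``skeleton'' accumulates only at $p$, and then pass to convex hulls. You are in fact more careful than the paper on the two points it treats as immediate --- using $g^2$ so that the (possibly swapped) pair $\Lambda_{\nu_n(G)}$ is pointwise fixed, and justifying the hull step via Gromov products; for the latter, note that upgrading from the union of pairwise geodesics to the full convex hull is where one should invoke the standard fact (valid in pinched negative curvature, hence for rank-one symmetric spaces) that the convex hull of a subset of $\overline X$ lies in a uniform neighborhood of its geodesic join, since naively iterating the thin-triangle estimate loses an additive constant at each stage.
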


\begin{proof}
  First, since $\nu$ is nontrivial, $\nu_n$ is nontrivial for
  sufficiently large $n$ and has virtually nilpotent image. Thus
  $\nu_n(G)$ is a nontrivial elliptic, parabolic, or hyperbolic
  subgroup, implying that $\mathcal C(\nu_n(G))$ is nonempty.

  By assumption $\nu$ is parabolic, so $\Lambda_\nu$ is a singleton
  $\{p\}$, and there is an element $g \in G$ so that the unique fixed
  point of $\nu(g)$ in $\overline{X}$ is $p$. By algebraic convergence we
  know that $\nu_n(g) \to \nu(g)$, so any limit of fixed points of
  $\nu_n(g)$ in $\overline{X}$ is fixed by $\nu(g)$, hence equal to
  $p$. Since the fixed points of $\nu_n(g)$ contain the fixed points
  of $\nu_n(G)$, this proves that $\mathrm{Fix}(\nu_n(G)) \to \{p\}$
  and therefore $\mathcal{C}(\nu_n(G)) \to \{p\}$.
\end{proof}

Below we state the key lemma we need for our proof of the implication
at hand. This lemma says that isometries of $X$ whose fixed points all
lie close to $p$ and remain bounded far from $p$ belong to a compact
subset of $\Isom(X)$.

\begin{lemma}\label{lem:CAT(-1).2} Let $p\in \partial X$, and let $K$
  and $K'$ be two compact subsets of $\partial X$ not containing
  $p$. Then there exists an open neighborhood $U$ of $p$ in $\overline X$,
  disjoint from $K$ and $K'$, such that the set of isometries
  $g\in \Isom(X)$ satisfying $\mathrm{Fix}(g) \subset U$ and
  $g(K)\cap K'\neq \emptyset$ is relatively compact in $\Isom(X)$.
\end{lemma}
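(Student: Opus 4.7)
The plan is to argue by contradiction, extracting a divergent sequence of isometries whose fixed point sets collapse to $\{p\}$, and then to derive a contradiction from the convergence group dynamics of $\Isom(X)$ on $\overline X$.

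Assuming no such $U$ exists, I will fix a decreasing basis $(U_n)$ of open neighborhoods of $p$ in $\overline X$ with $U_n \cap (K \cup K') = \emptyset$ and $\bigcap_n \overline{U_n} = \{p\}$, and produce by a diagonal argument a sequence $(g_n)$ in $\Isom(X)$ satisfying $\mathrm{Fix}(g_n) \subset U_n$, $g_n(K) \cap K' \neq \emptyset$, and $g_n$ leaving every compact subset of $\Isom(X)$. Choosing $x_n \in K$ and $y_n \in K'$ with $g_n(x_n) = y_n$, I will use that $\Isom(X)$ is a discrete convergence group on $\partial X$ (cf.\ \cite{Tukia2}) to pass to a subsequence and find $a, b \in \partial X$ with $g_n \to b$ uniformly on compacts of $\partial X \setminus \{a\}$. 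Every isometry of $X$ fixes at least one point of $\overline X$, so picking $q_n \in \mathrm{Fix}(g_n) \subset U_n$, any accumulation point of $(q_n)$ in $\overline X$ lies in $\bigcap_n \overline{U_n} = \{p\}$. Moreover $q_n$ cannot accumulate in $X$, since the estimate $d(x_0, g_n x_0) \leq 2\, d(x_0, q_n)$ combined with Arzelà--Ascoli would then force $(g_n)$ to subconverge in $\Isom(X)$, contradicting its divergence.

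The key step will be to upgrade this to $a = b = p$. Here the CAT($-1$) geometry (equivalently, the rank-one hypothesis) is essential, and I expect to proceed by case analysis on the isometry type of $g_n$. For loxodromic $g_n$, both fixed points $a_n, b_n$ lie in $U_n$, so the axis of $g_n$ (a geodesic in $\overline X$ with endpoints $a_n, b_n \to p$) lies in arbitrarily small neighborhoods of $p$ in $\overline X$; projecting $x_0$ to this axis and computing as in the upper half plane model yields $g_n x_0 \to p$, hence $b = p$, and symmetrically $a = p$. For parabolic $g_n$ the orbit $(g_n x_0)$ travels along the horosphere through $x_0$ centered at $p_n \to p$ and escapes toward its basepoint as $d(x_0, g_n x_0) \to \infty$, again yielding $a = b = p$. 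For elliptic $g_n$ we have $q_n \in X$ with $q_n \to p$, and since $g_n$ fixes $q_n$, both $g_n x_0$ and $g_n^{-1} x_0$ lie on the metric sphere of radius $d(x_0, q_n)$ around $q_n$; a visual-topology computation shows that divergent rotations push $x_0$ off this sphere toward $p$.

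Once $a = b = p$ is established, the contradiction is immediate: $K$ is a compact subset of $\partial X \setminus \{p\} = \partial X \setminus \{a\}$, so $g_n|_K \to p$ uniformly, giving $y_n = g_n(x_n) \to p$; but $K'$ is closed and $p \notin K'$, a contradiction. The hard part will be the case analysis establishing $a = b = p$, especially the elliptic subcase, where the rank-one hypothesis enters by forcing the sphere-collapse behavior that fails in higher rank (as foreshadowed by the discussion at the start of \Cref{sec:CAT(-1)}).
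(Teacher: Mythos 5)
Your strategy is genuinely different from the paper's. The paper argues directly and quantitatively: it projects points of $K\cup K'$ onto a fixed horosphere $S$ centered at $p$ along the geodesics $[k;\mathcal C]$ to the convex hull $\mathcal C$ of $\mathrm{Fix}(g)$, shows (\Cref{lem:CAT(-1).1}) that these projections land in a fixed compact $T\subset S$, and then uses the $1$-Lipschitz property of nearest-point projection onto convex sets in a CAT($-1$) space to show that any admissible $g$ moves a point of $T$ into a fixed compact $T'$, whence relative compactness by properness of the action; no contradiction, no convergence dynamics, and no trichotomy on isometry type. Your soft framework (the diagonal extraction of a divergent sequence, the identification of the accumulation set of $\mathrm{Fix}(g_n)$ with $\{p\}$, and the final contradiction once $a=b=p$ is known) is sound.

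The gap sits exactly where you flag it, and in the loxodromic case it is a genuine one: as written, your argument never uses divergence of $(g_n)$. You deduce $g_nx_0\to p$ from the fact that the axis of $g_n$ lies in a small neighborhood of $p$ in $\overline X$, but that implication is false --- loxodromics whose axes shrink to $p$ while their translation lengths shrink sufficiently fast converge to the identity, so $g_nx_0\to x_0\ne p$ even though the axes collapse to $p$. Such $g_n$ are of course not divergent, which is precisely the point: divergence is the essential input, and your sketch does not say where it enters. The elliptic case is likewise asserted rather than proved. Both can be closed uniformly: for a suitable $\xi_n\in\mathrm{Fix}(g_n)$ one has $(g_nx_0\mid\xi_n)_{x_0}\ge\tfrac12 d(x_0,g_nx_0)$ --- for elliptics because $d(q_n,g_nx_0)=d(q_n,x_0)$, for parabolics because $g_n$ preserves horospheres centered at its fixed point, and for loxodromics because the Busemann cocycle of $g_n$ at its attracting fixed point equals $-\tau_n\le 0$ --- so divergence together with $\xi_n\to p$ forces $(g_nx_0\mid p)_{x_0}\to\infty$, i.e.\ $g_nx_0\to p$, and similarly for $g_n^{-1}x_0$. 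With that supplied your proof is complete, though it ends up reproving case by case the single estimate that the paper's horosphere argument obtains in one stroke.
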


The idea behind this lemma is that isometries which fix points close
to $p$ should behave roughly like elements preserving a horosphere $S$
through $p$, and the stabilizer of $S$ in $G$ acts properly on
$S$.

Before proving the lemma, we set up some notation. Whenever
$\mathcal C$ is a convex subset of $\overline{X}$, and
$k \in \overline{X} \minus \mathcal C$, we let $[k; \mathcal{C}]$
denote the geodesic segment between $k$ and its projection
$\pi_{\mathcal{C}}(k)$ on $\mathcal C$.

Now fix a point $p \in \partial X$ as in the lemma, and let $S$ be a
horosphere centered at $p$. For any convex subset
$\mathcal C \subset \overline{X}$ and any
$k \in \overline{X} \minus \mathcal C$, let $s(k; \mathcal C)$ denote
the point on the intersection $[k; \mathcal C]\cap S$ closest to $k$
(assuming this intersection exists).

The first step in the proof of \Cref{lem:CAT(-1).2} is the following,
which says that these ``projections'' $s(k;\mathcal C)$ stay in a
compact subset of $X$ when $k$ stays far away from $p$ and
$\mathcal C$ is close to $p$.

\begin{lemma}\label{lem:CAT(-1).1}
  Let $K$ be a compact subset of $\partial X$ disjoint from $p$. Then
  there exists a neighborhood $U$ of $p$ in $\overline X$ such that:
  \begin{itemize}
  \item for any convex subset $\mathcal C$ of $U$ and any $k\in K$,
    the geodesic segment $[k;\mathcal C]$ intersects $S$ at least
    once, and
  \item the set
    $\{s(k;\mathcal C) \in X, k\in K, \mathcal C \subset U\}$ has
    compact closure in $X$.
  \end{itemize}
\end{lemma}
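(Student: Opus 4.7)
The plan is to exploit the Busemann function $\beta = \beta_p$ based at $p$, normalized so that $S = \{\beta = 0\}$ and so that the horoball $B = \{\beta < 0\}$ has $p$ in its closure. For each $k \in \dX \minus \{p\}$ the unique geodesic $\gamma_k$ from $k$ to $p$ meets $S$ at a single point $s_0(k)$, and since $X$ is CAT$(-1)$ the assignment $k \mapsto s_0(k)$ is continuous on $\dX \minus \{p\}$. Thus $\Sigma := s_0(K)$ is a compact subset of $S$, and I would then fix a slightly larger compact neighborhood $\Sigma^+$ of $\Sigma$ in $S$. The intuition is that the $s(k;\mathcal C)$ from the lemma should be perturbations of the reference points $s_0(k)$ once $\mathcal C$ has been pushed sufficiently close to $p$.

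\textbf{Choice of $U$.} I would choose $U$ small enough that, for every $k \in K$ and every $q \in U$, the geodesic from $k$ to $q$ (a geodesic ray when $q \in X$, a bi-infinite geodesic when $q \in \dX \minus \{p\}$, and $\gamma_k$ itself when $q = p$) crosses $S$ at a point lying in $\Sigma^+$. The core observation is that, as $q \to p$ in the cone topology on $\overline X$, the geodesic $[k,q]$ converges to $\gamma_k$ uniformly on compact subsets of $X$, so in particular its intersection with $S$ converges to $s_0(k)$. Because $K$ is compact and the convergence is jointly continuous in $(k,q)$, it can be made uniform in $k \in K$; hence a sufficiently small neighborhood $U$ of $p$ in $\overline X$ has the stated property.

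\textbf{Conclusion and main obstacle.} Given such a $U$, for any convex $\mathcal{C} \subseteq U$ and any $k \in K$ the projection $\pi_{\mathcal{C}}(k)$ lies in $\mathcal{C} \subset U$, so by the choice of $U$ the segment $[k;\mathcal{C}]$ crosses $S$ at a point in $\Sigma^+$; this yields the first bullet, and in particular $s(k;\mathcal{C})$ is well-defined. Since $s(k;\mathcal{C}) \in \Sigma^+$ for every admissible $(k,\mathcal{C})$, and $\Sigma^+$ is a compact subset of $S\subset X$ independent of $k$ and $\mathcal{C}$, the second bullet follows. The main technical obstacle is the uniform joint continuity of $(k,q) \mapsto [k,q]\cap S$ on $K \times U$: one must verify that geodesics depend continuously on their (possibly ideal) endpoints, uniformly over the compact set $K$, and handle cleanly the case $\pi_{\mathcal C}(k) \in \dX$, in which $[k;\mathcal{C}]$ is a bi-infinite geodesic whose intersection with $S$ must be controlled. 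Both points follow from standard CAT$(-1)$ fellow-traveling and Busemann-function estimates, but pinning down the precise quantitative version tailored to horospheres based at $p$ is where the care is required.
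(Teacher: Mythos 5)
Your proposal is correct and follows essentially the same route as the paper's own proof: both take the reference points $s(k;\{p\})$ on the horosphere $S$ (whose image over $k\in K$ is compact, since $k\mapsto s(k;\{p\})$ is a homeomorphism of $\partial X\setminus\{p\}$ onto $S\setminus\{p\}$) and then use continuous dependence of the geodesic $[k;\mathcal C]$ on its data to keep $s(k;\mathcal C)$ in a fixed compact neighborhood of that image once $\mathcal C\subset U$. Your explicit reduction of $[k;\mathcal C]$ to the single geodesic $[k,\pi_{\mathcal C}(k)]$ with $\pi_{\mathcal C}(k)\in U$ is a clean way of packaging the uniformity in $(k,\mathcal C)$ that the paper leaves implicit.
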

\begin{proof}
  For any $k\in K$, the geodesic $[k;\{p\}]$ intersects $S$
  transversely exactly once, at the point denoted by
  $s(k;\{p\})$. Moreover, the induced map
  $\partial X\setminus \{p\}\to S\setminus \{p\}$ is a homeomorphism,
  so the image of $K$ is compact in $S\setminus \{p\}\subset X$.

  Now, near any pair $(k;\mathcal C)$ such that the geodesic
  $[k;\mathcal C]$ intersects $S$ transversely, the assignment
  $(k;\mathcal C) \mapsto s(k;\mathcal C)$ is locally a well-defined
  continuous map, because the geodesic $[k;\mathcal C]$ varies
  continuously with $k$ and $\mathcal C$.  This proves the claim.
\end{proof}

\begin{proof}[Proof of \Cref{lem:CAT(-1).2}]
  Fix a neighborhood $U$ of $p$ in $\partial X$ that verifies the
  conclusion of the previous \Cref{lem:CAT(-1).1} for both $K$ and
  $K'$. Without loss of generality we may assume that $U$ is a convex
  subset of $\overline{X}$. Let $T\subset S\setminus \{p\}$ be a
  compact set containing all $s(k;\mathcal C)$ for $k\in K\cup K'$ and
  convex subsets $\mathcal C \subset U$, and let $D_T$ be the diameter
  of $T$.  Consider an isometry $g$ such that the convex hull
  $\mathcal C$ of its fixed points lies in $U$, and such that there
  exists $k\in K$ with $k'=g(k)\in K'$.  Then $g$ sends the geodesic
  $[k;\mathcal C]$ to $[k',\mathcal C]$. We want to prove that
  $g(s(k;\mathcal C))$ is uniformly close to $s(k';\mathcal C)$ on the
  geodesic $[k',\mathcal C]$.

  First, consider the case where $\mathcal C$ is a singleton in
  $U\subset \partial X$. Fix an origin $o\in X$. The stabilizer of $o$
  is compact and acts transitively on $\partial X$.  So, up to a
  uniformly compact conjugation, and if necessary a shrinking of $U$,
  we may assume without loss of generality that $\mathcal
  C=\{p\}$. Then $g$ stabilizes $S$, and we actually have
  $g(s(k;\mathcal C))=s(k';\mathcal C)$.

  The other possibility is that $\mathcal C$ is not a singleton, so
  that $\mathcal C \cap X$ is non-empty. In this case, we can define
  $t(k;\mathcal C)$ to be the distance from $s(k;\mathcal C)$ to the
  projection $\pi_{\mathcal{C}}(k)$ of $k$ on $\mathcal C$.  Up to
  shrinking $U$ another time to a smaller subset, we may assume that,
  for all $k\in K$, $\mathcal C$ included in $U$, we have
  $t(k;\mathcal C)> 2D_T$.

  For simplicity, for the rest of the proof, we let $s(k)$, $t(k)$ and
  $\pi(k)$ denote the points $s(k;\mathcal C)$, $t(k;\mathcal C)$ and
  $\pi_{\mathcal C}(k)$, for any $k\in K$. The points $\pi(k)$ and
  $\pi(k')$ are the projections on $\mathcal C$ of $s(k)$ and $s(k')$
  respectively, and the distance between those last two points is at
  most $D_T$.

  As the projection map to a convex set does not increase distances in
  a CAT$(-1)$-space, we obtain:
  \[ d(\pi(k),\pi(k'))\leq d(s(k),s(k'))\leq D_T.\] The distance
  $t(k')$ between $s(k')$ and its projection $\pi(k')$ then satisfies
  (by the triangle inequality):
  \begin{align*}
    |t(k')-t(k)|&\leq d(\pi(k'),\pi(k))+d(s(k),s(k'))\\
                & \leq 2D_T.
  \end{align*}
  So $g(s(k))$ and $s(k')$ are two points on the geodesic
  $[k',\mathcal C]$, whose distance from $\mathcal C$ is respectively
  $t(k)$ and $t(k')$. Thus their relative distance is less than
  $2D_T$.

  In either case above, we conclude that $g(s(k))$ lies at distance at
  most $2D_T$ from $s(k')\in T$. The $2D_T$-neighborhood $T'$ of $T$
  is a compact set in $X$. The isometry $g$ sends a point of $T$ to
  $T'$. As the action of $\Isom(X)$ on $X$ is proper, the set of
  isometries $h$ verifying $h(T)\cap T'\neq \emptyset$ is
  compact. This finishes the proof of the lemma.
\end{proof}

Equipped with these results, we can proceed with the proof that strong
convergence implies peripheral stability, under the assumptions of
\Cref{prop:strong_relative_convergence}.  Recall that we consider a
sequence $\rho_n$ of representations that converges algebraically to a
geometrically finite representation $\rho$. We assume moreover that
the representations $\rho_n$ are all faithful when restricted to the
parabolic peripheral subgroups.  We fix such a subgroup $H$ and, by
assumption \ref{item:relative_strong_convergence}, we have that
$H_n:=\rho_n(H)$ converges in the Chabauty topology to
$H_\infty:=\rho(H)$, which is a nontrivial parabolic subgroup in
$\Isom(X)$.

\begin{proof}[Proof of \ref{item:relative_strong_convergence} $\implies$
  \ref{item:peripheral_stability}]
  Suppose that the sequence $(\rho_n)$ is not peripherally stable and
  that the subgroup $H$ is a witness for this: there exists a
  neighborhood $U$ of $p$ in $\partial X$, a compact
  $K \subset \partial X$ disjoint from $U$ and a divergent sequence
  $\gamma_n\in H$ such that $\rho_n(\gamma_n)(K)$ has an accumulation
  point outside $U$.  So there is a compact $K'$ disjoint from $U$
  such that $\rho_n(g_n)(K)\cap K'\neq \emptyset$.

  Since $\rho(H)$ is a nontrivial parabolic subgroup and
  $\rho_n \to \rho$ algebraically, we may apply
  \Cref{lem:CAT(-1).3}. This means that $\Lambda_{H_n}$ converges to
  $\{p\}$ in the Hausdorff topology on $\partial X$. In particular,
  for sufficiently large $n$, $\Lambda_{H_n}$ is contained in $U$.

  Now we can apply \Cref{lem:CAT(-1).2}, which tells us that the
  elements $\rho_n(\gamma_n)$ remain in a compact subset of
  $\Isom(X)$. Up to extraction, we can assume they converge, and by
  the relative strong convergence assumption
  \ref{item:relative_strong_convergence}, the limit is $\rho(\gamma)$
  for some $\gamma\in H$. Thus $\rho_n(\gamma^{-1}\gamma_n)$ converges
  to the identity, and since $\rho(H)$ is discrete,
  \Cref{prop:uniformly_discrete_open} implies that $\rho_n(\gamma^{-1}\gamma_n) = e$, hence
  $\rho_n(\gamma) = \rho_n(\gamma_n)$, for sufficiently large
  $n$. Since $\rho_n$ is faithful when restricted to $H$, we have
  $\gamma_n = \gamma$, which contradicts the assumption that the
  sequence $\gamma_n$ is divergent. This proves that the family
  $(\rho_n)$ is peripherally stable around $\rho$.
\end{proof}

The last step of the proof of \Cref{prop:strong_relative_convergence}
is carried out in the following section.

\subsubsection{Peripheral stability implies strong
  convergence}\label{sec:peripheral_to_strong}

\begin{proof}[Proof of \ref{item:peripheral_stability} $\implies$
  \ref{item:strong_convergence}]

  The proof of this implication is similar to the proof of
  Proposition~4.6 in \cite{Weisman-EGF}. Assume that
  \ref{item:peripheral_stability} holds: the family $(\rho_n)$ is
  peripherally stable around $\rho$. Algebraic convergence
  $\rho_n \to \rho$ ensures that condition \ref{item:Chabauty_C2} for
  Chabauty convergence holds. To show that \ref{item:Chabauty_C1} also
  holds, it suffices to prove the following:
  \begin{claim}
    For any sequence $(\gamma_n)$ of pairwise distinct elements in
    $\G$, the sequence $(\rho_n(\gamma_n))$ leaves every compact
    subset of $\Isom(X)$.
  \end{claim}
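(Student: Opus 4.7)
The plan is to argue by contradiction: assume that a subsequence of $(\rho_n(\gamma_n))$ converges to some $g \in \Isom(X)$. Since $(\gamma_n)$ is pairwise distinct in $\G$ and $\G$ acts as a discrete convergence group on its Bowditch boundary $\bgamh$, after a further extraction we obtain $\gamma_n \to z_+ \in \bgamh$ and $\gamma_n^{-1} \to z_- \in \bgamh$. The strategy is to derive a uniform EGF-type compression along the varying sequence $(\rho_n, \gamma_n)$, then force $g$ to collapse a nonempty open set into a closed set with empty interior, contradicting that $g$ is a homeomorphism of $\dX$.

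The core technical task is to prove the following uniform EGF dynamics: for every open $U \supset \phi^{-1}(z_+)$ and every compact $K \subset \dX \minus \phi^{-1}(z_-)$, one has $\rho_n(\gamma_n) K \subset U$ for all sufficiently large $n$. I would obtain this by paralleling the proof of \cite[Prop.~4.6]{Weisman-EGF}, now using \Cref{lem:peripheral_edge_stability} in place of the automaton property~\ref{item:automaton_edge_inclusion} to gain uniformity in $\rho_n$. Concretely, apply property~\ref{item:automaton_quasidensity} to decompose $\gamma_n = \alpha_1^{(n)} \cdots \alpha_{k_n}^{(n)} b_n$ with $|b_n| \le R$ along a directed path $z_1^{(n)} \to \cdots \to z_{k_n+1}^{(n)}$ in the finite graph $\mathcal G$; after further extraction, $b_n$ and the endpoint vertices $z_1^{(n)}, z_{k_n+1}^{(n)}$ are constant. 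Iterating \Cref{lem:peripheral_edge_stability} along the path then yields, for $n$ large enough that $\rho_n$ lies in the open neighborhood produced by that lemma,
\[
\rho_n(\alpha_1^{(n)} \cdots \alpha_{k_n}^{(n)}) \overline{N_{\eps'}(W(z_{k_n+1}^{(n)}))} \subset W(z_1^{(n)}).
\]
The remaining work is to use properties \ref{item:automaton_sets_proper} and \ref{item:parabolic_vertex_coset}--\ref{item:parabolic_edge_compatible} to correlate the endpoint vertices with the boundary limits: the sets $W(z_1^{(n)})$ shrink onto $\phi^{-1}(z_+)$ along paths whose combinatorial representatives approach $z_+$, and dually $W(z_{k_n+1}^{(n)})$ expands to exhaust $\dX \minus \phi^{-1}(z_-)$.

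Granted this uniform dynamics, pass to the limit $n \to \infty$: from $\rho_n(\gamma_n) \to g$ and $\rho_n(\gamma_n) K \subset U$ we obtain $g(K) \subseteq \overline U$. Letting $U$ shrink onto $\phi^{-1}(z_+)$ and $K$ exhaust $\dX \minus \phi^{-1}(z_-)$, we conclude
\[
g\bigl(\dX \minus \phi^{-1}(z_-)\bigr) \subseteq \phi^{-1}(z_+).
\]
By \Cref{prop:good_boundary_extension}, each $\phi^{-1}(z_\pm)$ is either a singleton (conical case) or the limit set of a virtually nilpotent subgroup of $\Isom(X)$, which in rank one has cardinality at most two. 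In particular both are closed proper subsets of $\dX$ with empty interior, so $\dX \minus \phi^{-1}(z_-)$ is nonempty and open; its image under the homeomorphism $g$ is therefore nonempty and open, yet contained in the empty-interior set $\phi^{-1}(z_+)$ -- contradiction.

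The main obstacle is the middle step: bootstrapping the edge-wise uniformity of \Cref{lem:peripheral_edge_stability} into the global uniform EGF contraction needed to pass to the limit, simultaneously across the varying representations $\rho_n$ and the varying elements $\gamma_n$. This requires the automaton machinery of \cite{Weisman-EGF} to detect the boundary limits $z_\pm \in \bgamh$ from the finite combinatorial data of the paths $z_1^{(n)} \to \cdots \to z_{k_n+1}^{(n)}$, while peripheral stability is what makes the inclusions uniform as $\rho_n$ varies.
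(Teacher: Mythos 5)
Your overall toolbox (the relative automaton, \Cref{lem:peripheral_edge_stability}, and the decomposition from \ref{item:automaton_quasidensity}) is the same as the paper's, and your final contradiction step --- pass to the limit $g$, show $g$ maps a nonempty open set into a set with empty interior --- is a legitimate repackaging of the paper's direct argument that the images of a fixed open $W$-set have diameter tending to $0$, hence no accumulation in $\Homeo(\dX)$. But there is a genuine gap exactly at the point you flag as ``the main obstacle'': iterating \Cref{lem:peripheral_edge_stability} along the automaton path only produces contraction when the path lengths $k_n$ tend to infinity, and a sequence of pairwise distinct $\gamma_n$ need not have this property. By \ref{item:parabolic_vertex_coset} the label sets at parabolic vertices are infinite cosets $g\Stab_\G(p)$, so for example $\gamma_n = h_n$ with $(h_n)$ a divergent sequence in a single peripheral subgroup, or more generally a bounded-length alternating product $g_0 h_1^{(n)} g_1 \cdots h_k^{(n)} g_k$, is realized by automaton paths of uniformly bounded length. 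In that regime the nesting inclusion you write down is true but yields no contradiction: it only says the image lands in the fixed open set $W(z_1)$, and your assertion that ``$W(z_1^{(n)})$ shrink onto $\phi^{-1}(z_+)$'' fails, since the $W$-sets are a fixed finite collection.

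The paper handles this by splitting into two cases according to whether $(\gamma_n)$ is bounded in the relative Cayley graph $\Cay(\G,S,\Pi)$. The unbounded case is essentially your argument (there $m(n)\to\infty$ and \cite[Prop.\ 7.11]{Weisman-EGF} gives the shrinking diameters, using \ref{item:automaton_sets_proper}). The bounded case is a separate induction on the number $k$ of parabolic blocks in a shortest relative word $g_0 h_1^{(n)} \cdots h_k^{(n)} g_k$: one applies the peripheral stability definition directly to each unbounded block $h_i^{(n)} \in \Stab_\G(p_i)$ to push a compact set $K$ with nonempty interior onto the singleton $\{\rho(g_0)\,\cdot\,\phi^{-1}(p_1)\}$, using at each stage that $\rho(g_{i-1})p_i \ne p_{i-1}$ to keep the intermediate images away from the next repelling point. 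You would need to supply this second argument (or an equivalent one) for your proof to close; nothing in your automaton-nesting mechanism substitutes for it.
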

  So, fix such a sequence $(\gamma_n)$ in $\G$. It will be enough to
  show that every subsequence of $(\gamma_n)$ has a further
  subsequence which leaves every compact subset of $\Isom(X)$, so we
  can extract subsequences throughout the rest of the argument.

  Fix a finite generating set $S$ for $\G$, and let $\Cay(\G, S, \Pi)$
  denote the \emph{relative Cayley graph} for $\G$, i.e. the Cayley
  graph of $\G$ with respect to the generating set
  \[
    S \cup \bigcup_{p \in \Pi} \Stab_\G(p).
  \]
  The path metric on this graph induces a metric on $\G$; note that
  this metric is \emph{not} quasi-isometric to the word metric induced
  by a finite generating set.

  We now consider two cases, depending on the behavior of our sequence
  $(\gamma_n)$ with respect to the metric coming from the relative
  Cayley graph:
  \begin{description}
  \item[Case 1: the sequence $(\gamma_n)$ is unbounded in
    $\Cay(\G, S, \Pi)$.]  For this case, we use the relative automaton
    $\mathcal G$ described in \Cref{sec:automaton_exists}. Using
    property \ref{item:automaton_quasidensity} of the automaton, for
    each $n$, write
    \[
      \gamma_n = \alpha_1^{(n)} \cdots \alpha_{m(n)}^{(n)}\beta^{(n)},
    \]
    where $\beta^{(n)} \in \G$ has length at most $R$ with respect to
    the path metric on $\Cay(\G, S)$, and each element
    $\alpha_i^{(n)}$ lies in a set $L(z_i)$ for a vertex path
    $z_1^{(n)} \to z_2^{(n)} \to \ldots \to z_{m(n) + 1}^{(n)}$ in the
    automaton $\mathcal{G}$. Since $\beta^{(n)}$ has uniformly bounded
    length, it suffices to prove that the sequence
    \[
      \rho_n(\gamma_n(\beta^{(n)})^{-1}) = \rho_n(\alpha_1^{(n)} \cdots
      \alpha_{m(n)}^{(n)})
    \]
    leaves every compact subset of $\Isom(X)$.

    Properties \ref{item:parabolic_vertex_coset} and
    \ref{item:conical_vertex_singleton} of the automaton tell us that
    each element $\alpha_i^{(n)}$ has uniformly bounded length with
    respect to the metric on $\Cay(\G, S, \Pi)$. So, our assumption
    for this case tells us that the length $m(n)$ of the vertex path
    is unbounded. After extracting a subsequence, we can assume that
    this length tends to infinity.

    Now, consider the sequence of subsets
    \begin{equation}
      \label{eq:nested_subset}
      \rho_n(\alpha_1^{(n)} \cdots \alpha_{m(n)}^{(n)})
      W(z^{(n)}_{m(n) + 1}).
    \end{equation}

    \Cref{lem:peripheral_edge_stability} implies that there is a
    uniform positive constant $\eps_0 > 0$ so that, for every
    sufficiently large $n$ and every every $1 \le i \le m(n)$, we have
    \begin{equation}
      \label{eq:uniform_strong_nesting}
      \rho_n(\alpha_i^{(n)})N_{\eps_0}(W(z^{(n)}_{i + 1})) \subset
      W(z_i^{(n)}).
    \end{equation}
    After extracting a subsequence, we can assume that the vertex
    $z_1^{(n)}$ is independent of $n$; we write this vertex as
    $z_1$. By applying \cite[Proposition 7.11]{Weisman-EGF}, we can
    see that the uniform strong nesting in
    \eqref{eq:uniform_strong_nesting} implies that, with respect to a
    fixed choice of metric on the open subset $W(z_1)$, the diameter
    of the set defined in \eqref{eq:nested_subset} tends to zero as
    $m(n)$ tends to infinity. (This step is where we apply property
    \ref{item:automaton_sets_proper} of the automaton, since otherwise
    the cited proposition in \cite{Weisman-EGF} does not apply.)

    We can extract a further subsequence so that the vertex
    $z^{(n)}_{m(n) + 1}$ does not depend on $n$, and write this vertex
    as $z'$. Then, as $W(z')$ is nonempty and open (so in particular
    has positive diameter), it follows that
    $\rho_n(\alpha_1^{(n)} \cdots \alpha_{m(n)}^{(n)})$ does not
    accumulate to any point in $\Homeo(\dX)$, and therefore leaves
    every compact subset of $\Isom(X)$ as desired.
  \item[Case 2: the sequence $(\gamma_n)$ is bounded in
    $\Cay(\G, S, \Pi)$.]  In this case, we choose a shortest
    representative for each $\gamma_n$ with respect to the relative
    generating set $S \cup \bigcup_{p \in \Pi}\Stab_\G(p)$. Such a
    word has the form of an alternating product
    \[
      \gamma_n = g_0^{(n)}h_1^{(n)}g_1^{(n)} \cdots
      h_{k(n)}^{(n)}g_{k(n)}^{(n)},
    \]
    where, for each fixed $0 \le i \le k$, $(g_i^{(n)})$ is a bounded
    sequence in $\G$ (with respect to the word metric induced by $S$),
    and each $h_i^{(n)}$ lies in $\Stab_\G(p_i^{(n)})$ for some
    $p_i^{(n)}$ in $\Pi$. Since the length of this word is uniformly
    bounded, we can extract a subsequence and assume that $k(n)$ is a
    fixed number $k$, independent of $n$. By repeatedly combining
    adjacent terms in this word and extracting further subsequences,
    we can also assume that for each fixed $1 \le i \le k$, the
    sequence $(h_i^{(n)})$ is unbounded (with respect to the word
    metric coming from $S$), the parabolic point $p_i^{(n)}$ is a
    point $p_i$ independent of $n$, and
    $\rho(g_{i})p_{i} \ne p_{i-1}$.

    Let $K$ be a compact subset of
    $\dX \minus \{\rho(g_k^{-1})p_k\}$ with nonempty
    interior. We claim that the sequence $\rho_n(\gamma_n)K$ converges
    to a singleton; since $K$ has nonempty interior this will ensure
    that $\rho_n(\gamma_n)$ leaves every compact subset of
    $\Homeo(\dX)$, hence of $\Isom(X)$.

    To prove the claim, we induct on $k$. In the case $k = 1$, since
    $K$ is a compact subset of
    $\dX \minus \{\rho(g_1^{-1})p_1\}$, and $g_1$ is fixed, we
    know that for all sufficiently large $n$, the set $\rho_n(g_1)K$
    is a compact subset of $\dX \minus \{p_1\}$. Then, since
    $h_1^{(n)}$ is unbounded in $\Stab_\G(p_1)$, the
    peripheral stability assumption implies that for any neighborhood
    $U$ of $p_1$ in $\dX$, for all sufficiently large $n$ we have
    $\rho_n(h_1^{(n)}g_1)K \subset U$. So, $\rho_n(h_1^{(n)}g_1)K$
    converges to the singleton $\{p_1\}$, and so the sequence
    $\rho_n(g_0h_1^{(n)}g_1)K$ converges to the singleton
    $\{g_0p_1\}$.

    When $k > 1$, the exact same reasoning implies that the sequence
    of sets $\rho_n(h_k^{(n)}g_k)K$ converges to the singleton
    $\{p_k\}$. In particular, since we know that
    $\rho(g_{k-1})p_k \ne p_{k-1}$, we know that for
    sufficiently large $n$ we also have
    $\rho_n(g_{k-1})p_k \ne p_{k-1}$, and thus (also for large $n$)
    the set $\rho_n(h_k^{(n)}g_k)K$ lies in a fixed compact subset of
    $\dX \minus \{\rho(g_{k-1}^{-1})p_{k-1}\}$. So applying
    induction we see that the sequence of sets
    \[
      \rho_n(\gamma_n)K = \rho_n(g_0h_1^{(n)} \cdots
      h_{k-1}^{(n)}g_{k-1})\rho_n(h_k^{(n)}g_k)K
    \]
    again converges to a singleton, and we are done.
  \end{description}
\end{proof}

\section{Convergence of limit sets and Cannon-Thurston
  maps}\label{ssec:convergencetheorems}

We are now able to prove \Cref{thm:theorem_1} and \Cref{thm:theorem_2}
from the introduction. We have already done the difficult part, which
was to connect the framework of EGF representations and peripheral
stability to the notions of geometrical finiteness and strong
convergence; with this relationship established, both of our main
theorems are straightforward corollaries of the relative stability
theorem for EGF representations
(\Cref{thm:relative_stability_theorem}).

\subsection{Limit sets}

First we will prove \Cref{thm:theorem_1}, whose statement is subsumed
by the following:
\begin{theorem}[Convergence of limit sets]
  \label{thm:limit_limit_sets}
  Let $X$ be a noncompact rank-one symmetric space, let $\G$ be a
  finitely generated group, and let $(\rho_n)_{n \in \N}$ be a
  sequence of faithful representations converging algebraically to a
  geometrically finite representation $\rho$.

  Then, $\rho_n$ converges to $\rho$ strongly if and only if
  $\rho_n \to \rho$ relatively strongly. In this case:
  \begin{enumerate}[label=(\arabic*)]
  \item\label{item:geom_finite_rel_open} For all sufficiently large
    $n$, the representation $\rho_n$ is geometrically finite. Further,
    for any subgroup $H \subset \Gamma$, $\rho_n(H)$ is parabolic only
    if $\rho(H)$ is parabolic.
  \item\label{item:limsets_converge} The limit sets $\Lambda_n$ of
    $\rho_n(\Gamma)$ converge in the Hausdorff topology to the limit
    set $\Lambda$ of $\rho(\Gamma)$.
  \end{enumerate}
\end{theorem}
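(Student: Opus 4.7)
First I observe that since each $\rho_n$ is faithful on $\G$, its restriction to every peripheral subgroup $H\in \mathcal H$ is faithful. So Proposition~\ref{prop:strong_relative_convergence} applies and immediately yields the three-way equivalence between strong convergence, relatively strong convergence, and peripheral stability of the family $\{\rho_n\}$ around $\rho$. The first equivalence is the opening assertion of the theorem, and I will use peripheral stability for the rest of the argument.

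For part~\ref{item:geom_finite_rel_open}, I apply Theorem~\ref{thm:relative_stability_theorem} to the peripherally stable family $\{\rho_n\}$. This gives that, for all sufficiently large $n$, the representation $\rho_n$ is EGF with respect to $\mathcal H$, with boundary extension $\phi_n:\Lambda_{\rho_n}\to \bgamh$ (the unique extension of Proposition~\ref{prop:good_boundary_extension}). Because every $H\in\mathcal H$ is virtually nilpotent (as $\rho$ is geometrically finite), Proposition~\ref{prop:egf_nilpotent_gf} then gives that $\rho_n$ is geometrically finite and that $\mathcal H$ contains the full collection of $\rho_n$-parabolic subgroups. If $H\subseteq \G$ is any subgroup with $\rho_n(H)$ parabolic, then $H$ sits inside some maximal $H^*\in \mathcal H$ with $\rho_n(H^*)$ parabolic; since $H^*$ is also a $\rho$-parabolic subgroup and $\rho$ is faithful, the nontrivial image $\rho(H)$ is itself a parabolic subgroup of $\rho(H^*)$.

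For part~\ref{item:limsets_converge}, I split Hausdorff convergence into two inclusions. The inclusion $\limsup_n \Lambda_{\rho_n}\subseteq \Lambda_\rho$ follows directly from the quantitative conclusion of Theorem~\ref{thm:relative_stability_theorem} applied with the compact set $Z=\bgamh$: for any $\eps>0$ and all $n$ large, $\Lambda_{\rho_n}=(\phi_n)^{-1}(\bgamh)$ lies in the $\eps$-neighborhood of $\phi^{-1}(\bgamh)=\Lambda_\rho$. The reverse inclusion $\Lambda_\rho\subseteq \liminf_n \Lambda_{\rho_n}$ is not supplied by the stability theorem and has to be extracted from algebraic convergence. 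I would use the classical fact that in the non-elementary case the fixed points of non-elliptic elements of $\rho(\G)$ are dense in $\Lambda_\rho$ (the elementary case is trivial). For any $\gamma\in \G$ with $\rho(\gamma)$ non-elliptic, $\gamma$ has infinite order, so by faithfulness each $\rho_n(\gamma)$ does too. Strong convergence together with Proposition~\ref{prop:uniformly_discrete_open} makes $\rho_n(\G)$ discrete for large $n$, and an infinite-order element of a discrete subgroup of $\Isom(X)$ cannot be elliptic; hence the fixed points of $\rho_n(\gamma)$ lie in $\Lambda_{\rho_n}$ and converge in $\dX$ to those of $\rho(\gamma)$. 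A diagonal argument gives $\Lambda_\rho\subseteq \liminf_n \Lambda_{\rho_n}$, and compactness of $\Lambda_\rho$ promotes this to Hausdorff convergence.

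The main obstacle I anticipate is exactly this lower semicontinuity: the relative stability theorem delivers only the upper direction, and the reverse direction must be argued separately. The density of non-elliptic fixed points, combined with the uniform discreteness guaranteed by strong convergence, is what bridges the gap; everything else in the theorem is a direct assembly of the machinery already developed in the previous sections.
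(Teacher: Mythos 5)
Your proposal is correct, and for the equivalence of the three convergence notions and for part~\ref{item:geom_finite_rel_open} it follows the paper's proof essentially verbatim (the paper is slightly terser on the final claim of \ref{item:geom_finite_rel_open}, simply noting $\mathcal H_n \subseteq \mathcal H$; your extra sentence about an infinite subgroup of a discrete parabolic group being parabolic is a fine elaboration). Where you genuinely diverge is the lower semicontinuity in part~\ref{item:limsets_converge}. You assert that \Cref{thm:relative_stability_theorem} ``delivers only the upper direction,'' but the paper in fact extracts \emph{both} inclusions from it: it covers $\Lambda$ by finitely many compacta of diameter at most $\epsilon$, pushes them forward by $\phi$ to a finite cover $\mathcal Z$ of $\bgamh$ with $\operatorname{diam}\phi^{-1}(Z)\le\epsilon$, and then uses that each $\phi_n^{-1}(Z)$ is nonempty (the boundary extension of \Cref{prop:good_boundary_extension} is surjective) and contained in $N_\epsilon(\phi^{-1}(Z))$; since $\phi^{-1}(Z)$ has diameter at most $\epsilon$, every point of $\Lambda$ is then within $2\epsilon$ of $\Lambda_n$, giving Hausdorff distance at most $2\epsilon$ in one stroke. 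Your substitute for the reverse inclusion --- density of fixed points of loxodromic elements in $\Lambda_\rho$, openness of the non-elliptic conditions, and convergence of fixed points under algebraic convergence, with discreteness of $\rho_n(\G)$ supplied by part~\ref{item:geom_finite_rel_open} --- is a valid classical argument and is in some ways more elementary, needing only algebraic convergence for that half. What the paper's covering trick buys is uniformity over $\Lambda$ without invoking any density facts, and it is exactly the argument that is reused for the uniform convergence of Cannon--Thurston maps in \Cref{thm:theorem_2}, so it is worth internalizing that the stability theorem really does control both directions.
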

\begin{proof}
  The equivalence of strong convergence and strong relative
  convergence for $\rho_n \to \rho$ is given by
  \Cref{prop:strong_relative_convergence}. So, now suppose that the
  convergence $\rho_n \to \rho$ is relatively strong.

  We first show that \ref{item:geom_finite_rel_open} holds. As we have
  already observed several times, the limiting representation $\rho$
  is EGF with respect to the collection $\mathcal H$ of its $\rho$-parabolic
  subgroups, which are virtually nilpotent. Since $(\rho_n)$ converges
  relatively strongly and $\rho_n$ is faithful, the family $(\rho_n)$
  is peripherally stable, with respect to $\mathcal H$ around $\rho$,
  by \Cref{prop:strong_relative_convergence}. Then by
  \Cref{thm:relative_stability_theorem}, $\rho_n$ is EGF for
  sufficiently large $n$, again with respect to the collection
  $\mathcal H$. Thus, by \Cref{prop:egf_nilpotent_gf}, the
  representations $\rho_n$ are geometrically finite for large $n$, and
  the collection $\mathcal H_n$ of $\rho_n$-parabolic subgroups is contained in
  $\mathcal{H}$. This proves \ref{item:geom_finite_rel_open}.

  To prove \ref{item:limsets_converge}, let $\epsilon > 0$ be
  fixed. One may write the limit set $\Lambda$ of $\rho(\Gamma)$ as a
  union of finitely many compact subsets with diameter at most
  $\epsilon$, with respect to a chosen visual metric on $\partial
  X$. Applying the boundary extension $\phi:\Lambda \to \bgamh$ to
  each of these sets, we obtain a finite collection $\mathcal{Z}$ of
  compact subsets of $\bgamh$, so that
  $\bigcup_{Z \in \mathcal{Z}} Z = \bgamh$, and so that each
  $\phi^{-1}(Z)$ has diameter at most $\epsilon$.

  Now let $\phi_n:\Lambda_n \to \bgamh$ be the boundary extension for
  the EGF representation $\rho_n$, with respect to the peripheral
  structure $\mathcal{H}$; recall from
  \Cref{prop:good_boundary_extension} that the domain of this boundary
  extension is the limit set $\Lambda_n$ of $\rho_n(\Gamma)$. Applying
  the inclusion in \Cref{thm:relative_stability_theorem}, we see that
  for all sufficiently large $n$ and all $Z \in \mathcal{Z}$, the set
  $\phi_n^{-1}(Z)$ is contained in an $\epsilon$-neighborhood of
  $\phi^{-1}(Z)$, and thus
  $\Lambda_n = \bigcup_{Z \in \mathcal{Z}}\phi_n^{-1}(Z)$ is within
  Hausdorff distance $2\epsilon$ of
  $\Lambda = \bigcup_{Z \in \mathcal{Z}}\phi^{-1}(Z)$. Since
  $\epsilon > 0$ was arbitrary this completes the proof.
\end{proof}

\subsection{Cannon-Thurston maps}
\label{sec:ct_maps}

We now turn to \Cref{thm:theorem_2}, which concerns the convergence of
Cannon-Thurston maps for a strongly convergent sequence $(\rho_n)$ of
geometrically finite representations. In order to define
Cannon-Thurston maps, we need the notion of a weakly type-preserving
sequence of representations:
\begin{definition}
  \label{defn:type_preserving}
  A sequence of representations $(\rho_n)_{n\in \N}$ from $\G$ to
  $\Isom(X)$ is \emph{weakly type-preserving} if the collection
  $\mathcal H_n$ of $\rho_n$-parabolic subgroups of $\G$ always
  contains the collection $\mathcal H_1$ of $\rho_1$-parabolic
  subgroups of $\G$.
\end{definition}

Suppose that a sequence of representations $(\rho_n)$ converges
relatively strongly to a geometrically finite representation $\rho$,
whose collection of $\rho$-parabolic subgroups is $\mathcal H$. By
\Cref{thm:limit_limit_sets}, after forgetting a finite number of
indices, we may assume that each $\rho_n$ is geometrically finite, and
that the collection of $\rho_n$-parabolic subgroups $\mathcal{H}_n$ is a 
subset of $\mathcal{H}$. Since each $\mathcal{H}_n$ and
$\mathcal{H}$ consists of a finite number of conjugacy classes, after
extracting a further subsequence we can assume that
$\mathcal{H}_1 \subseteq \mathcal{H}_n$ for all $n$. Thus we have
shown:
\begin{proposition}
  \label{prop:subseq_type_preserve}
  Suppose that $\rho$ is a faithful geometrically finite
  representation and $(\rho_n)$ is a sequence of faithful
  representations converging strongly to $\rho$. Then a subsequence of
  $(\rho_n)$ is weakly type-preserving, and consists of geometrically
  finite representations.
\end{proposition}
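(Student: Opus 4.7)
The plan is to build the desired subsequence by combining \Cref{thm:limit_limit_sets} with a pigeonhole argument on the set of peripheral collections $\mathcal{H}_n$. Since strong convergence is among the hypotheses (and is equivalent to relative strong convergence by \Cref{prop:strong_relative_convergence}), I can invoke \Cref{thm:limit_limit_sets} directly.

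First I would apply part \ref{item:geom_finite_rel_open} of \Cref{thm:limit_limit_sets}: after discarding finitely many initial terms, every $\rho_n$ is geometrically finite, and its collection $\mathcal{H}_n$ of $\rho_n$-parabolic subgroups is contained in the collection $\mathcal{H}$ of $\rho$-parabolic subgroups. Since $(\G, \mathcal{H})$ is a relatively hyperbolic pair, $\mathcal{H}$ consists of only finitely many $\G$-conjugacy classes of subgroups. Each $\mathcal{H}_n$ is by definition closed under $\G$-conjugation and is contained in $\mathcal{H}$, so $\mathcal{H}_n$ is determined by the subset of conjugacy classes in $\mathcal{H}$ that it meets; there are only finitely many such subsets.

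By the pigeonhole principle, a subsequence of $(\mathcal{H}_n)$ is constant, equal to some fixed $\mathcal{H}_* \subseteq \mathcal{H}$. After re-indexing this subsequence so that its initial term is $\rho_1$, I get $\mathcal{H}_1 = \mathcal{H}_n$ for every index $n$ in the subsequence, which in particular gives $\mathcal{H}_1 \subseteq \mathcal{H}_n$. This is precisely the weakly type-preserving condition of \Cref{defn:type_preserving}, and each representation in the subsequence is geometrically finite by the first step, so the proof is complete. No step in this argument poses any real difficulty; the substantive content has already been absorbed by \Cref{thm:limit_limit_sets} and the relative hyperbolic structure on $(\G, \mathcal{H})$, so there is no main obstacle to highlight.
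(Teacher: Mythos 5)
Your proposal is correct and follows essentially the same route as the paper: invoke \Cref{thm:limit_limit_sets} to get geometrical finiteness and $\mathcal{H}_n \subseteq \mathcal{H}$ for large $n$, then use the fact that $\mathcal{H}$ has only finitely many conjugacy classes to extract a subsequence on which the weakly type-preserving condition holds. Your pigeonhole step even yields a constant subsequence $\mathcal{H}_n = \mathcal{H}_*$, which is slightly stronger than the containment $\mathcal{H}_1 \subseteq \mathcal{H}_n$ the paper settles for, but the argument is the same in substance.
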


Now let $(\rho_n)$ be any weakly type-preserving sequence of
faithful geometrically finite representations, and let $\mathcal{H}_n$
be the collection of $\rho_n$-parabolic subgroups. The inclusion
$\mathcal{H}_1 \subseteq \mathcal{H}_n$ implies (via
\Cref{prop:egf_nilpotent_gf}) that the representation $\rho_1$ is
actually EGF with respect to $\mathcal{H}_n$ for every $n$. Thus we
have a $\G$-equivariant boundary extension
$\phi_{1,n}:\Lambda_1 \to \partial(\G, \mathcal{H}_n)$. We also know
from \Cref{thm:injective_bdry_extension} that the boundary extension
$\phi_{n,n}:\Lambda_n \to \partial(\G, \mathcal{H}_n)$ for $\rho_n$ is
a $\G$-equivariant homeomorphism. So we may make the following
definition:
\begin{definition}
  For a weakly type-preserving sequence $(\rho_n)$ of faithful
  geometrically finite representations, we define the Cannon-Thurston
  maps $\CT_{1,n}:\Lambda_1 \to \Lambda_n$ by the composition
  $\phi_{n,n}^{-1} \circ \phi_{1,n}$.
\end{definition}

If the sequence $(\rho_n)$ converges strongly to a geometrically
finite representation $\rho$, then by \Cref{thm:limit_limit_sets},
$\mathcal{H}_n$ is eventually a subset of the collection $\mathcal{H}$
of parabolic subgroups for $\rho$. In particular, as
$\mathcal{H}_1 \subseteq \mathcal{H}_n$ for all $n$, we have
$\mathcal{H}_1 \subseteq \mathcal{H}$. Thus (again by
\Cref{prop:egf_nilpotent_gf}) there is an EGF boundary extension
$\phi_{1, \infty}:\Lambda_1 \to \bgamh$, and we may also define the
Cannon-Thurston map $\CT_{1,\infty}:\Lambda_1 \to \Lambda$ via the
composition $\phi^{-1} \circ \phi_{1,\infty}$, where
$\phi:\Lambda \to \bgamh$ is the EGF boundary extension for $\rho$.

With this notation established, we can now prove \Cref{thm:theorem_2}
from the introduction; we give a restatement of this result
below. This result can be thought of as a more precise version of the
convergence of limit sets expressed in \Cref{thm:limit_limit_sets}.
\begin{theorem}[Convergence of Cannon-Thurston maps]
  Let $X$ be a noncompact rank-one symmetric space, let $\G$ be a
  finitely generated group, and let $(\rho_n)_{n \in \N}$ be a weakly
  type-preserving sequence of faithful geometrically finite
  representations, converging relatively strongly to a geometrically
  finite representation $\rho$.

  Then the sequence of Cannon-Thurston maps
  $\CT_{1,n}:\Lambda_1 \to \Lambda_n$ converges uniformly to the
  Cannon-Thurston map $\CT_{1,\infty}:\Lambda_1 \to \Lambda_n$.
\end{theorem}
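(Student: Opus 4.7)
The plan is to express both Cannon--Thurston maps through boundary extensions into a single, common Bowditch boundary, and then invoke \Cref{thm:relative_stability_theorem}. Let $\mathcal{H}$ denote the collection of $\rho$-parabolic subgroups and $\mathcal{H}_n$ the collection of $\rho_n$-parabolic subgroups. The weakly type-preserving hypothesis gives $\mathcal{H}_1 \subseteq \mathcal{H}_n$, and \Cref{thm:limit_limit_sets} gives $\mathcal{H}_n \subseteq \mathcal{H}$ for all sufficiently large $n$. By \Cref{prop:egf_nilpotent_gf}, each of $\rho_1$, $\rho_n$, and $\rho$ is then EGF with respect to $\mathcal{H}$, with unique boundary extensions $\phi_1 = \phi_{1,\infty}\colon \Lambda_1 \to \bgamh$, $\psi_n\colon \Lambda_n \to \bgamh$, and a homeomorphism $\phi\colon \Lambda \to \bgamh$. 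Moreover, \Cref{prop:strong_relative_convergence} yields that $\{\rho_n\}$ is peripherally stable about $\rho$ with respect to $\mathcal{H}$, so \Cref{thm:relative_stability_theorem} is available for the extensions $\psi_n$ and $\phi$.

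The central step is the compatibility identity
\[
  \psi_n \circ \CT_{1,n} = \phi_1 = \phi \circ \CT_{1,\infty},
\]
whose second equality is immediate from $\CT_{1,\infty} = \phi^{-1} \circ \phi_{1,\infty}$. For the first equality, I would define the $\G$-equivariant continuous surjection
\[
  q_n := \psi_n \circ \phi_{n,n}^{-1} \colon \partial(\G, \mathcal{H}_n) \longrightarrow \bgamh,
\]
which collapses, for each $H \in \mathcal{H} \setminus \mathcal{H}_n$, the two fixed points of $H$ in $\partial(\G, \mathcal{H}_n)$ onto the single parabolic point of $H$ in $\bgamh$. Then $\psi_n \circ \CT_{1,n} = q_n \circ \phi_{1,n}$ by construction, and the remaining task is $q_n \circ \phi_{1,n} = \phi_1$. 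Using the fiber description in \Cref{prop:good_boundary_extension} together with the EGF convergence condition for $\phi_{1,n}$ over $(\G,\mathcal{H}_n)$, a direct check shows that $q_n \circ \phi_{1,n}$ satisfies \Cref{defn:egf_definition} relative to $\mathcal{H}$, and so is itself a boundary extension for $\rho_1$ with domain $\Lambda_1$; the uniqueness statement in \Cref{prop:good_boundary_extension} then forces it to coincide with $\phi_1$.

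With the compatibility identity in hand, the uniform convergence reduces to a simple covering argument. Fix $\epsilon > 0$. Since $\phi$ is a homeomorphism of compact metric spaces, $\phi^{-1}$ is uniformly continuous, so one can cover $\bgamh$ by finitely many closed sets $Z_1,\ldots,Z_k$ with $\mathrm{diam}(\phi^{-1}(Z_i)) < \epsilon$. Applying \Cref{thm:relative_stability_theorem} to each $Z_i$, one obtains $N$ such that for all $n \geq N$ and all $i$, the set $\psi_n^{-1}(Z_i)$ lies in the $\epsilon$-neighborhood of $\phi^{-1}(Z_i)$. For any $x \in \Lambda_1$, set $z := \phi_1(x)$ and pick $Z_i \ni z$: then $\CT_{1,\infty}(x) = \phi^{-1}(z) \in \phi^{-1}(Z_i)$, while by the compatibility identity $\CT_{1,n}(x) \in \psi_n^{-1}(z) \subseteq \psi_n^{-1}(Z_i)$, and hence $d(\CT_{1,n}(x), \CT_{1,\infty}(x)) \leq 2\epsilon$, uniformly in $x$.

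The main obstacle is the verification of the compatibility identity. The subtlety is that $\CT_{1,n}$ is defined using the peripheral structure $\mathcal{H}_n$, whereas \Cref{thm:relative_stability_theorem} naturally lives over the larger structure $\mathcal{H}$; reconciling the two requires tracking three distinct boundary extensions of $\rho_1$, $\rho_n$, and $\rho$ through the collapsing map $q_n$, and appealing to the uniqueness portion of \Cref{prop:good_boundary_extension} to pin down $q_n \circ \phi_{1,n}$ as $\phi_1$.
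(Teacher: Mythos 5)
Your proposal is correct in outline and follows the same skeleton as the paper's proof: reduce everything to boundary extensions over the single Bowditch boundary $\bgamh$, establish the compatibility identity $\phi_{n,\infty}\circ \CT_{1,n} = \phi\circ\CT_{1,\infty}$ (your $\psi_n$ is the paper's $\phi_{n,\infty}$, and your identity is the paper's $\CT_{n,\infty}\circ\CT_{1,n}=\CT_{1,\infty}$ composed with $\phi$), and then run exactly the covering argument you describe using \Cref{thm:relative_stability_theorem}. The one genuine divergence is how the compatibility identity is justified. The paper avoids your collapsing map $q_n$ entirely: it observes that when $\rho_1$ is non-elementary, a continuous $(\rho_1,\rho)$-equivariant map $\Lambda_1\to\Lambda$ is unique, because it is forced on the (dense) orbit of an attracting fixed point; since $\CT_{n,\infty}\circ\CT_{1,n}$ and $\CT_{1,\infty}$ are both such maps, they coincide. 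Your route instead asks for a verification that $q_n\circ\phi_{1,n}$ satisfies \Cref{defn:egf_definition} over the \emph{larger} peripheral structure $\mathcal{H}$, which you dismiss as ``a direct check.'' This is the delicate point: sequences converging in $\G\cup\bgamh$ need not converge in $\G\cup\partial(\G,\mathcal{H}_n)$ (a sequence tending to a parabolic point of some $H\in\mathcal{H}\setminus\mathcal{H}_n$ may oscillate between the two conical points lying over it), so the check amounts to re-deriving a piece of the relativization theorem of Wang that the paper only invokes through \Cref{prop:egf_nilpotent_gf}. It can be done, but as written it is the weakest link in your argument, and the paper's density/minimality argument is both shorter and sidesteps it. Two small additional remarks: the uniqueness statement of \Cref{prop:good_boundary_extension} that you invoke assumes $\rho_1$ non-elementary, so you, like the paper, must dispose of the elementary case ($|\Lambda_1|\le 2$) separately; and your final estimate should note that $\psi_n^{-1}(Z_i)$ lies in an $\epsilon$-neighborhood of a set of diameter at most $\epsilon$, which is why $2\epsilon$ suffices.
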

\begin{proof}
  First, if $\rho_1$ is elementary, meaning $\Lambda_1$ contains one or two points, then the statement is easy. We assume from now on that $\rho_1$ is non-elementary.

  Using the notation established above, we first observe that, since
  $\mathcal{H}_n \subseteq \mathcal{H}$ for all sufficiently large
  $n$, we can once again apply \Cref{prop:egf_nilpotent_gf} to see
  that that $\rho_n$ is EGF with respect to $\mathcal{H}$. Thus there
  is an EGF boundary extension $\phi_{n,\infty}:\Lambda_n \to \bgamh$
  and a Cannon-Thurston map
  $\CT_{n,\infty}:\Lambda_n \to \Lambda_\infty$ given by
  $\phi^{-1} \circ \phi_{n,\infty}$.

  We claim that the composition $\CT_{n,\infty} \circ \CT_{1,n}$ is
  precisely the Cannon-Thurston map $\CT_{1, \infty}$. As $\rho_1(\G)$
  is non-elementary, $\Lambda_1$ contains at least three points. Then
  there is some $\gamma \in \Gamma$ which has an attracting fixed
  point for the $\rho_1$-action of $\G$ on $\Lambda_1$. Any
  $(\rho_1, \rho)$-equivariant continuous map $\Lambda_1 \to \Lambda$
  must take this attracting fixed point to the attracting fixed point
  of $\rho(\gamma)$ (if $\rho(\gamma)$ is loxodromic) or to the unique
  fixed point of $\rho(\gamma)$ (if $\rho(\gamma)$ is
  parabolic). Moreover, since $\rho_1(\G)$ is non-elementary, every
  $\rho_1(\G)$-orbit in $\Lambda_1$ is dense. So any equivariant
  continuous map $\Lambda_1 \to \Lambda$ is uniquely determined on a
  dense set and therefore must agree with the Cannon-Thurston
  map. Since the composition $\CT_{n, \infty} \circ \CT_{1,n}$ is such
  an equivariant continuous map, we obtain
  \[
    \CT_{n, \infty} \circ \CT_{1,n} = \CT_{1, \infty},
  \]
  as claimed. We sum up the situation in the commutative diagram of \Cref{fig:diagram}.
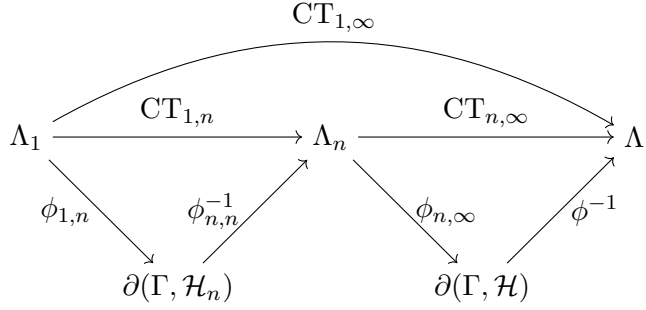
\begin{figure}[ht]
  \begin{center}
  \begin{tikzpicture}[
    ]
    \node   (L1)  at   (0,0)    {$\Lambda_1$};
    \node   (Ln)  at   (4,0)    {$\Lambda_n$};
    \node   (Bn)  at   (2,-2)   {$\partial (\G,\mathcal H_n)$};
    \node   (B)  at   (6,-2)   {$\partial (\G,\mathcal H)$};
    \node   (L)   at   (8,0)    {$\Lambda$};
    
    \draw[->] (L1) -- (Bn) node [midway,left] {$\phi_{1,n}$};
    \draw[->] (Bn) -- (Ln) node [midway,left] {$\phi^{-1}_{n,n}\,$};
    \draw[->] (L1) -- (Ln) node [midway,above] {$\CT_{1,n}$};
    \draw[->] (Ln) -- (L) node [midway,above] {$\CT_{n,\infty}$};

    \draw[->] (Ln) -- (B) node [midway,right] {$\phi_{n,\infty}$};
    \draw[->] (B) -- (L) node [midway,right] {$\phi^{-1}$};
    \draw[->] (L1) to[bend left=30]node [midway,above] {$\CT_{1,\infty}$} (L) ;
  \end{tikzpicture}
  \end{center}
  \caption{Synthetic view of boundary extensions and Cannon-Thurston
    maps.}\label{fig:diagram}
 \end{figure}

  Now, let $\epsilon > 0$ be fixed. As in the proof of
  \Cref{thm:theorem_1}, we may cover $\bgamh$ with a finite collection
  $\mathcal{Z}$ of compact sets so that, for each $Z \in \mathcal{Z}$,
  the preimage $\phi^{-1}(Z)$ has diameter at most $\epsilon$. By
  \Cref{thm:relative_stability_theorem}, for all sufficiently large
  $n$ (depending only on $\epsilon$ and $\mathcal{Z}$), for every
  $Z \in \mathcal{Z}$ the preimage $\phi_{n, \infty}^{-1}(Z)$ lies in
  an $\epsilon$-neighborhood of $\phi^{-1}(Z)$. Then, for any
  $x \in \Lambda_1$, we may choose $Z \in \mathcal{Z}$ so that
  $\phi_{1,\infty}(x) \in Z$, and therefore
  $\CT_{1, \infty}(x) = \phi^{-1} \circ \phi_{1,\infty}(x) \in
  \phi^{-1}(Z)$.

  On the other hand, since
  $\phi_{1, \infty} = \phi \circ \CT_{1, \infty}(x) \in Z$, we have
  \[
    \phi \circ \CT_{n, \infty} \circ \CT_{1, n}(x) \in Z,
  \]
  or equivalently $\phi_{n, \infty} \circ \CT_{1,n}(x) \in
  Z$. Thus $\CT_{1,n}(x) \in \phi_{n, \infty}^{-1}(Z)$ and the
  distance between $\CT_{1,n}(x)$ and $\CT_{1, \infty}(x)$ is at most
  $2\epsilon$, independent of $x$.

\end{proof}

\bibliographystyle{alpha}
\bibliography{biblio}

\end{document}